\newtheorem{thm}{Theorem}[section]
\newtheorem{prop}[thm]{Proposition}
\newtheorem{lem}[thm]{Lemma}
\newtheorem{cor}[thm]{Corollary}
\newtheorem{quest}[thm]{Question}
\theoremstyle{definition}
\newtheorem{dfn}[thm]{Definition}
\newtheorem{example}[thm]{Example}
\newtheorem{ex}[thm]{Example}
\newtheorem{rem}[thm]{Remark}
\theoremstyle{remark}
\newtheorem{remark}[thm]{Remark}
\numberwithin{equation}{section}
\newcommand{\coker}{\operatorname{coker}}
\newcommand{\Hom}{\operatorname{Hom}} 
\newcommand{\shHom}{\operatorname{{\mathscr Hom}}} 
\newcommand{\Spec}{\operatorname{Spec}}
\newcommand{\Sper}{\operatorname{Sper}}
\newcommand{\Supp}{\operatorname{Supp}}
\newcommand{\rank}{\operatorname{rank}} 
\newcommand{\diff}{\operatorname{d}}
\newcommand{\GL}{\operatorname{{\mathbf GL}}}
\newcommand{\tr}{\operatorname{tr}}
\newcommand{\cE}{{\mathcal E}}
\newcommand{\cF}{{\mathcal F}}
\newcommand{\cG}{{\mathcal G}}
\newcommand{\cK}{{\mathcal K}}
\newcommand{\cL}{{\mathcal L}}
\newcommand{\cO}{{\mathcal O}}
\newcommand{\cV}{{\mathcal V}}
\newcommand{\G}{{\mathbb G}}
\newcommand{\C}{{\mathbb C}}
\newcommand{\R}{{\mathbb R}}
\newcommand{\pp}{\mathbb{P}}
\newcommand{\N}{{\mathbb N}}
\newcommand{\Z}{{\mathbb Z}}
\begin{document}

\title{Real Fibered Morphisms and Ulrich Sheaves}

\author{Mario Kummer}
\address{Technische Universit\"at Berlin\\
Berlin, Germany}
\email{kummer@tu-berlin.de}
\author{Eli Shamovich}
\address{Department of  Mathematics\\ 
Ben-Gurion University of the Negev\\
84105 Beer-Sheva, Israel}
\email{eshamovich@uwaterloo.ca}
\thanks{The research of E. S. was partially carried out during the visits to the Department of Mathematics and Statistics of the University of Konstanz, supported by the EDEN Erasmus Mundus program (30.12.2013 - 30.6.2014). M. K. was supported by the Studienstiftung des Deutschen Volkes.}
\begin{abstract}
In this paper, we define and study real fibered morphisms. Such morphisms arise in the study of real hyperbolic hypersurfaces in $\pp^d$ and other hyperbolic varieties. We show that real fibered morphisms are intimately connected to Ulrich sheaves admitting positive definite symmetric bilinear forms.
\end{abstract}
\maketitle
\tableofcontents

$\,$

\section{Introduction}
\subsection{Background}
A homogeneous polynomial $f \in \R[x_0,\ldots,x_d]$ is called hyperbolic with respect to a point $e \in \R^{d+1}$, if $f(e) \neq 0$ and for every $x \in \R^{d+1}$ the roots of the univariate polynomial $f(e + t x)$ are all real. Hyperbolic polynomials were first studied in the context of partial differential equations since they arise as symbols of hyperbolic (and hence the name) partial differential equations with constant coefficients. Such PDEs are of interest because the Cauchy problem is well-defined in this case, see for example \cite{Ga51} and \cite{HorII}. The first to study geometric properties of hyperbolic polynomials was G{\aa}rding in \cite{Ga59}. G{\aa}rding showed that hyperbolic polynomials possess remarkable convexity properties and those results were extended by Bauschke, G{\"u}ler, Lewis and Sendov in \cite{BGLS01}.
In the last years, there has also been ample interest in hyperbolic polynomials from the areas of combinatorics \cite{COS04} and optimization \cite{Gu97, Re06}.
In that context, the so-called generalized Lax conjecture is an important open question asking whether the feasible sets of hyperbolic programming are the same as the feasible sets of semidefinite programming.
More recently, properties of stable polynomials, a special kind of hyperbolic polynomials, and certain determinantal representations of them were crucially used in the proof of the Kadison--Singer conjecture by Marcus, Spielman, and Srivastava \cite{MSS14}. Br\"and\'en reproved and slightly strengthened their results using convexity properties of hyperbolic polynomials \cite{Bra14}.
Very recently, hyperbolic polynomials also appeared in the context of exponential families in statistics \cite{MSUZ14}.

One can reformulate the hyperbolicity property in a more geometric way, namely, consider the hypersurface $X \subset  \pp^d$ cut out by $f$ and consider a point $e \in \pp^d$ off $X$. Then $f$ is hyperbolic with respect to $e$ if and only if for every real line $L$ through $e$ we have that $L \cap X \subset  X(\R)$. Vinnikov and the second author generalized this idea in \cite{SV14} to define hyperbolicity of a general real subvariety of $\pp^d$ with respect to a real linear subspace of correct dimension.
Well-studied examples of hyperbolic varieties that are not hypersurfaces are reciprocal linear spaces, i.e, the Zariski closure of the Cremona transform of a linear subspace in projective space. These varieties have been examined for example in the context of interior points methods for linear programming \cite{DLSV12} and entropy maximization for log-linear models \cite{SSV13}. Hyperbolicity (though not called so) of these varieties was shown by Varchenko \cite{Var95} and is used (not just) in the cited works at various points.

The classical example of a hyperbolic polynomial is the determinant of a generic symmetric $n \times n$ matrix. It can be shown that this polynomial is hyperbolic with respect to the identity matrix. This led Lax in 1958 \cite{Lax58} to ask whether every hyperbolic homogeneous polynomial in three variables has a determinantal representation. More precisely, assume $f \in \R[x_0,x_1,x_2]$ is a homogeneous polynomial of degree $m$, hyperbolic with respect to $(1,0,0)$. Lax asked whether there exist symmetric matrices $A_0, A_1, A_2 \in \textrm{M}_m(\R)$, with $A_0$ positive definite and $f = \det(x_0 A_0 + x_1 A_1 + x_2 A_2)$. It was observed by Lewis,Parrilo and Ramana in \cite{LPR05} that this follows from a result of Vinnikov and Helton in \cite{HelVin07}. The conjecture fails for $d > 2$ even in a weakened form, see \cite{Bea00}, \cite{Bra11} and \cite{KerVin12} for more details. The generalized Lax conjecture described above can be formulated as follows. Given a homogeneous 
polynomial $f \in \R[x_0,\ldots,x_d]$ hyperbolic with respect to $e$, can we find another homogeneous polynomial $g \in \R[x_0,\ldots,x_d]$ hyperbolic with respect to $e$, such that the product $fg$ is hyperbolic with respect to every point in the connected component of $e$ in $\R^{d+1} \setminus Z(f)$ and $fg$ has a symmetric determinantal representation definite at $e$? The best result known today regarding this conjecture is due to the first author in \cite{Kum}. The reader is referred to \cite{Vppf} for an extensive overview of classical notions of hyperbolicity and determinantal representations.

The goal of this paper is to study hyperbolicity and determinantal representations in an invariant way. Let $X \subset  \pp^d$ be a real subvariety hyperbolic with respect to a linear subspace $V \subset  \pp^d$, then the linear projection with center $V$ defines a morphism over the reals from $X$ to $\pp^k$ (here $k = \dim X$) that sends only real points to real points. Following the idea of Grothendieck, we isolate this property of a morphism between real (projective) varieties. In Section \ref{sec:real_fibered} we define real fibered morphisms as (finite and surjective) morphisms that map only real points to real points. We prove that real fibered morphisms are always unramified at smooth real points. We conclude that the Veronese embedding of $\pp^k$ is not hyperbolic whenever $k \geq 2$. We say that $X$ is weakly hyperbolic if $X$ admits a real fibered morphism to $\pp^k$. In the case of smooth projective curves, this is equivalent to the corresponding Riemann surface being of dividing type (sometimes also called type one). We then show that every such curve admits a hyperbolic embedding into $\pp^3$. Therefore, we conclude that every weakly hyperbolic curve can be hyperbolically embedded. This, however, fails if $\dim X > 1$ and we provide an example of a weakly hyperbolic variety that admits no hyperbolic embeddings.

While in the context of hyperbolicity real fibered morphisms appear as linear projections there are also applications where this is not the case. For example, the existence of real fibered morphisms from curves to the projective line and their properties have been studied by several authors \cite{huisman, gabard, cophuis, cop}. Typical questions include but are not limited to general existence results \cite{gabard} or constructions of real fibered morphisms of small degree \cite{cop}. In the study of amoebas of algebraic varieties, the question arises for which real hypersurfaces the logarithmic Gauss map, i.e. the map that sends a point $(x_0:\cdots:x_n)$ on the hypersurface defined by $h$ to $(x_0\frac{\partial h}{\partial x_0}:\cdots:x_n\frac{\partial h}{\partial x_n})$, is real fibered \cite{PR10, Mik04}. In fact, it was recently shown in \cite{brugalle2015non} that such hypersurfaces must have singularities by using our results on real fibered morphisms\footnote{The result in \cite{brugalle2015non} was obtained after a preliminary version of this paper was published on arXiv.}.

After reminding the reader of some results from commutative algebra in Section \ref{sec:comm}, we recall the definition of determinantal representations from \cite{SV14} and the notion of Ulrich sheaves from \cite{EFS03} and \cite{ESW03} in Section \ref{sec:ulrich}. Ulrich modules were first studied by Ulrich and his collaborators, see for example \cite{BHU87} and \cite{HUB91}. Eisenbud and Schreyer used Ulrich sheaves to construct determinantal and Pfaffian representations of Chow forms of subvarieties of $\pp^d$. We show that in fact Ulrich sheaves can be identified with determinantal representations in the sense of \cite{SV14}. This shows, in particular, that every determinantal representation of a smooth projective curve can be obtained using the algorithm described in \cite{SV14}. We believe that the correspondence between Ulrich sheaves and determinantal representations can be used in the future for proving the existence of Ulrich sheaves on certain subvarieties of $\pp^d$, as it has been done in \cite{kummer2016chow} for reciprocal linear spaces. The question of which subvarieties of $\pp^d$ can be the support of an Ulrich sheaf is of particular interest since the Boij--{S}\"oderberg cone of such varieties is the same as the one of projective space \cite{boij}. 

In Section \ref{sec:real_bilinear} we show that definite symmetric determinantal representations can be defined in terms of positive definite bilinear forms on Ulrich sheaves. This result has been used in \cite{kummer2016chow} to answer a question from \cite{SSV13}.
Following \cite{KMS14} we define the relative notion of $f$-Ulrich sheaves with respect to a finite flat morphism 
$f \colon X \to Y$. We give a characterization of real fibered morphisms in terms of positive semidefinite bilinear forms on coherent sheaves which
generalize the classic methods of checking real rootedness of univariate polynomials like the Hermite matrix or the B\'ezout matrix.
In particular, we show that if $f \colon X \to Y$ is a finite flat morphism with a positive $f$-Ulrich sheaf on $X$, 
then $f$ is real fibered. We then proceed to formulate a question which can be considered as a relative version of the generalized Lax conjecture.

\subsection{Notations and Convention}

Let $\mathbb{K}$ stand for either the field of real or complex numbers. By a $\mathbb{K}$-variety we mean a reduced, separated scheme of finite type over $\Spec \mathbb{K}$,
not necessarily irreducible.
A morphism of $\mathbb{K}$-varieties is always meant to be a morphism over $\Spec \mathbb{K}$.
A curve over $\mathbb{K}$ is a $\mathbb{K}$-variety of pure dimension one.
Now let $X$ be an $\R$-variety. 
We will write $X(\mathbb{K}) = \Hom_{\Spec \R}(\Spec \mathbb{K}, X)$ for the set of $\mathbb{K}$-points.
We can identify $X(\R)$ with the set 
of points $x \in X$, such that the residue field $\kappa(x)$ of $X$ at $x$ is $\R$. We will write 
$X_\C=X \times_{\Spec \R} \Spec \C$ for the complexification. 

The following description appears in \cite[Sec.\ 1.1]{Sil89} and in \cite[Ex.\ II.4.7]{Har77}. For every quasi-projective $\R$-variety $X$  the complexification $X_\C$ of $X$ comes equipped with an involution $\tau$, i.e.,
an isomorphism of $\R$-varieties $\tau: X_\C \to X_\C$,
such that $X(\R)$ can be identified with $X_{\C}(\C)^{\tau} = X(\C)^{\tau}$, namely the fixed points of $\tau$.
Given two $\R$-varieties $X$ and $Y$ and a morphism $f \colon X \to Y$, we have a corresponding morphism $f_{\C} \colon X_{\C} \to Y_{\C}$ and if we denote the involution of $X$ resp. $Y$ by $\tau$ resp. $\sigma$ we get that $f_{\C} \circ \tau = \sigma \circ f_{\C}$. Conversely, every morphism $g_\C \colon X_\C \to Y_\C$ that intertwines the involution, i.e., $g_{\C} \circ \tau = \sigma \circ f_{\C}$, descends to a morphism $g: X \to Y$.

We will use various results from real algebraic geometry. Good introductory references are \cite{And96, BCR13, MarshSOS, PosPols, ERA}.
Occasionally, we will make use of the \textit{real spectrum} of a ring. Given a ring $A$ the real spectrum $\Sper A$ is the set
of all pairs $\alpha=(\mathfrak{p},P)$ where $\mathfrak{p}$ is a prime ideal of $A$ and $P$ is an ordering of the residue field $\kappa(\mathfrak{p})$ \cite[\S 7.1]{BCR13}. See \cite[Prop. 7.1.2]{BCR13} for equivalent definitions.
Let $\alpha=(\mathfrak{p},P) \in \Sper A$ and let $\rho_{A, \mathfrak{p}}: A \to \kappa(\mathfrak{p})$ be
the canonical homomorphism.
We denote by $\Supp(\alpha)=\mathfrak{p}$ the support of $\alpha$, i.e., the prime ideal corresponding to $\alpha$.
For any element $f \in A$ we say that $f(\alpha) \geq 0$, i.e., $f$ is nonnegative in $\alpha$, if $\rho_{A, \mathfrak{p}}(f) \in P$.
We write $f(\alpha)>0$ if $f(\alpha)\geq 0$ and $f \not\in \Supp(\alpha)$.
On $\Sper A$ we consider the \textit{spectral topology}. This is the topology on $\Sper A$ that is generated by the subbasis of open sets of the form \[\{\alpha \in \Sper A:\,\, f(\alpha)>0\}\] for $f \in A$ \cite[Def. 7.1.3]{BCR13}.
Note that every ring homomorphism $A \to B$ induces a continuous map $\Sper B \to \Sper A$ \cite[Prop. 7.1.7]{BCR13}.
Now let $A$ be a finitely generated reduced $\R$-algebra and let $X=\Spec A$ be the corresponding affine $\R$-variety. Since $\R$ has exactly one ordering, the points of $\Sper A$ whose support is a maximal ideal can be identified with $X(\R)$.
Under this identification, we have that $X(\R)$ is dense in $\Sper A$ \cite[III \S 3, Thm. 7]{ERA}.
Finally, for $\alpha, \beta \in \Sper A$ we say that $\alpha$ specializes to $\beta$ if $\beta \in \overline{\{\alpha\}}$.

\bigskip

\noindent \textbf{Acknowledgements.}
We would like to thank Christoph Hanselka, Claus Scheiderer, Bernd Sturmfels, Victor Vinnikov and Amnon Yekutielli
for some helpful discussions related to the subject of this paper. 

\section{Real Fibered Morphisms} \label{sec:real_fibered}

In this section, we work over the ground field $\R$. We will write $\pp^d = \pp_\R^d$ for the projective $d$-space
over $\R$.
The goal of this section is to study real fibered morphisms of $\R$-varieties. We start with a definition.

\begin{dfn} \label{dfn:real_fibered}
Let $f \colon X \to Y$ be a finite surjective morphism of $\R$-varieties.
We say that $f$ is real fibered if, for every $x \in X$, we have that $f(x) \in Y(\R)$ if and only if $x \in X(\R)$.
\end{dfn}

\begin{rem}
 It is clear that every $\R$-point is sent to an $\R$-point. The real fibered property implies the converse as well.
 Now let $X$ and $Y$ be quasi-projective $\R$-varieties and $g_{\C} \colon X_{\C} \to Y_\C$ be a morphism that intertwines the involutions on $X_\C$ and $Y_\C$ which is finite and surjective. Then by descent theory the morphism of $\R$-varieties $g \colon X \to Y$
 that we get is real fibered if and only if $g_{\C}$ maps only 
 fixed points (of the involution) to fixed points. For details on Galois descent see \cite[Sec.\ 14.20]{GW10}
\end{rem}

\begin{rem}
The property of being real fibered is stable under base-change, i.e., if $X \to Y$ is real fibered and $Y' \to Y$ is any morphism of $\R$-varieties, then the induced morphism $X \times_Y Y' \to Y'$ is also real fibered. Furthermore, the composition of two real fibered morphisms is again real fibered.
\end{rem}

To motivate the definition we recall the following definition of hyperbolic varieties from \cite{SV14}.

\begin{dfn} \label{dfn:hyperbolic}
Let $X$ be a projective $\R$-variety of dimension $k$ and let $\iota \colon X \to \pp^d$ be an embedding.
Let $V \subset  \pp^d$ be a real $d-k - 1$-dimensional linear subspace, such that $\iota(X) \cap V = \emptyset$. We say that $\iota$ is a hyperbolic embedding if for every real $d-k$-dimensional subspace $U$ containing $V$, we have $U \cap \iota(X) \subset  \iota(X(\R))$. If the embedding is clear from the context, we will simply say that $X$ is hyperbolic with respect to $V$.
\end{dfn}

Assume that $X\subset \pp^d$ is hyperbolic with respect to $V$.
Then the linear projection from $V$ induces a finite surjective morphism $f \colon X \to \pp^k$.
Furthermore, $X$ being hyperbolic with respect to $V$ implies that $f$ is real fibered. Therefore, we make the following definition.

\begin{dfn} \label{dfn:weakly_hyperbolic}
We say that an $\R$-variety $X$ is weakly hyperbolic if there exists a real fibered morphism $f \colon X \to \pp^k$.
\end{dfn}


\begin{example}\label{exp:interlace}
 Let $C \subset  \pp^n$ be the rational normal curve of degree $n$, i.e., the image of the morphism
 \[ \pp^1 \to \pp^n, \,\, (s:t) \mapsto (s^n: s^{n-1} t : \ldots : t^n). \]
 The purpose of this example is to show that this is a hyperbolic embedding of $\pp^1$ and to give a description of all the $n-2$-dimensional subspaces of $\pp^n$ with respect to which $C$ is hyperbolic.
 
 Consider a linear projection of $C$ from an $n-2$-plane in $\pp^n$ disjoint from $C$ to $\pp^1$.
 After choosing a basis on $\pp^1$ this corresponds to a morphism $\pp^1 \to \pp^1$ given by two 
 bivariate polynomials $f$ and $g$ of degree $n$ without common projective zero. The elements of the fiber over a real point $(\lambda: \mu) \in \pp^1$ under this morphism are the zeros of $\mu  f - \lambda g$.
 Thus the morphism is real fibered if and only if for all $\lambda, \mu \in \R$, not both zero, the polynomial $ \mu  f - \lambda g$ has only  real roots. It is classically known, see, for example, \cite[Thm. 6.3.8]{Rah02},
 that this is equivalent to $f$ and $g$ having interlacing zeros, i.e., all zeros
 of $f$ and $g$ are simple and real and each connected component of $\pp^1(\R)\smallsetminus\{P \ | \, f(P)=0\}$ contains exactly one zero of $g$ and vice versa.

 To make an explicit example consider the case $n=3$, the case of the twisted cubic curve.
 The line $L_1$ in $\pp^3$ spanned by the points $(0:1:0:0)$ and $(0:0:1:0)$ is disjoint from $C$ and the projection from $L_1$ corresponds to the morphism $\pp^1\to \pp^1, \, (s:t) \mapsto (s^3:t^3)$. The zeros of $s^3$ and $t^3$ clearly do not interlace and we have that $C$ is not hyperbolic with respect to $L_1$. But the line $L_2$ spanned by the points $(1:0:1:0)$ and $(0:1:0:4)$ corresponds to the morphism $\pp^1\to \pp^1, \, (s:t) \mapsto (s^3-st^2:4s^2 t-t^3)$. Since the zeros of $s^3-st^2$ and $4s^2 t-t^3$ do interlace, the twisted cubic $C$ is hyperbolic with respect to $L_2$.
 
 Hyperbolicity of the rational normal curve can also be seen without using results about interlacing polynomials.
 The M\"obius transformation  $\Phi(z)=\frac{z-i}{z+i}$ sends the real line (including infinity) to the unit circle.
 Conversely, every point that is sent to the unit circle lies on the real line. 
 The map $\psi_k(z)=z^k$ has the property that $z$ is on the unit circle 
 if and only if $\psi_k(z)$ is on the unit circle for $k \geq 1$.
 This shows that the morphism $\pp^1 \to \pp^1$ that we get from $\Phi^{-1} \circ \psi_k \circ \Phi$
 is a real fibered morphism of degree $k$ for all $k \geq 1$.
\end{example}

The following proposition gives a condition when a weakly hyperbolic variety admits a hyperbolic embedding.

\begin{prop} \label{prop:very_ample_hyperbolic}
Assume that $X$ is weakly hyperbolic, with $f \colon X \to \pp^k$ being the real fibered morphism. If $\cL = f^* \cO_{\pp^k}(1)$ is very ample, then $X$ admits a hyperbolic embedding.
\end{prop}
\begin{proof}
Let $\iota \colon X \to \pp(\textrm{H}^0(X,\cL)^*)$ be the embedding obtained from $\cL$. Write $f =(\mu_0:\cdots:\mu_k)$, for $\mu_0,\ldots,\mu_k \in \textrm{H}^0(X,\cL)$. Then it is immediate that $\iota(X)$ is hyperbolic with respect to the real subspace $\mu_0 = \ldots = \mu_k = 0$.
\end{proof}

For any $\R$-variety $X$, we can equip $X(\C)$ with the classical topology and $X(\R)$ is a 
closed subset of $X(\C)$ with respect to the classical topology.
Recall that a smooth, geometrically irreducible projective curve $X$ is called of dividing type if $X(\C) \setminus X(\R)$ has two connected components. Then we have the following:

\begin{thm} \label{thm:curves_embedding}
Let $X$ be a smooth, geometrically irreducible projective curve over $\R$. The following are equivalent:
\begin{enumerate}[(i)]
\item $X$ is of dividing type.

\item $X$ is weakly hyperbolic.

\item $X$ admits a hyperbolic embedding into some $\pp^d$.

\end{enumerate}

\end{thm}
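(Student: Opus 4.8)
The plan is to prove the equivalence by a cycle of implications, with the bulk of the work in showing $(i) \Rightarrow (iii)$. The implication $(iii) \Rightarrow (ii)$ is immediate: a hyperbolic embedding $X \hookrightarrow \pp^d$ with respect to a linear space $V$ of codimension $k+1 = 2$ composes with linear projection from $V$ to give a finite surjective morphism $X \to \pp^1$, which is flat because $X$ is a smooth curve mapping to a smooth curve, and which is real fibered precisely by the hyperbolicity condition in Definition \ref{dfn:hyperbolic}. For $(ii) \Rightarrow (i)$, suppose $f \colon X \to \pp^1$ is real fibered. Then $f_\C \colon M \to \pp^1(\C) = S^2$ maps $M \setminus M(\R)$ into $S^2 \setminus \pp^1(\R)$, and the latter has two connected components (the two open hemispheres), which $\tau$ and complex conjugation interchange; pulling back, $M \setminus M(\R)$ is a disjoint union of two (possibly empty, but nonempty since $f$ is surjective of positive degree and $M \setminus M(\R) \neq \emptyset$ unless $X$ has no complex non-real points, an excluded or trivial case) $\tau$-conjugate open sets, so $M$ is of dividing type — here one should check that each of these two open sets is connected, which follows because $f$ restricted to each is a proper nonconstant holomorphic map to a hemisphere (a disk), hence the preimage is connected by the argument that a finite branched cover of a simply connected surface splits into connected sheeted pieces, or more carefully by a monodromy/degree count.

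The heart of the matter is $(i) \Rightarrow (iii)$. Assume $M$ is of dividing type, write $M_+$ and $M_-$ for the two components of $M \setminus M(\R)$, with $\tau(M_+) = M_-$, and let $M(\R) = \partial M_+$ have, say, $s \geq 1$ ovals (there must be at least one oval since $M$ is dividing). The idea is to produce a real divisor class whose associated linear system gives a hyperbolic embedding. First I would recall the classical fact (going back to Witt, and in the form used for dividing curves, to work on "totally real" divisors) that on a dividing curve there exist divisors $D$ defined over $\R$ such that the linear system $|D|$ separates points and tangent vectors and, crucially, such that every member of a suitable sub-linear-system is totally real, i.e. supported on $M(\R)$. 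Concretely: fix $k+1 = 2$ meromorphic functions, or rather sections of a high power of an ample real line bundle, whose common zero locus is empty and whose ratio has only real critical values; the existence of such follows from the theory of real Riemann surfaces of dividing type, where one can find a real function $M \to \pp^1$ all of whose branch points are real — this is essentially the statement that $M$ admits a real meromorphic function whose restriction to $M(\R)$ attains its extrema appropriately, constructed by solving a Dirichlet-type problem or by deforming a hyperelliptic-like map. Once such a real fibered $g \colon X \to \pp^1$ is in hand, the line bundle $\cL = g^*\cO_{\pp^1}(1)$ is ample, and by replacing $g$ with the map given by sections of $\cL^{\otimes n}$ for $n \gg 0$ we may assume $\cL$ (or its relevant twist) is very ample; but we must do this without destroying real-fiberedness. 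This is where I expect the main obstacle: very ampleness is easy to arrange, but keeping the "only real points to real points" property while enlarging the linear system requires choosing the extra sections to be real and to not create new non-real intersection points with general real linear spaces. The fix is to invoke Proposition \ref{prop:very_ample_hyperbolic} directly once we know $\cL$ itself is very ample, so the real task reduces to: \emph{produce a real fibered $g \colon X \to \pp^1$ with $g^*\cO(1)$ very ample}. For a curve, $g^*\cO_{\pp^1}(1)$ has degree equal to $\deg g$, and a line bundle of degree $\geq 2\mathrm{genus}(X)+1$ on a curve is very ample, so it suffices to find a real fibered morphism of sufficiently high degree.

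Thus the endgame is to show: a dividing real curve admits a real fibered morphism to $\pp^1$ of arbitrarily large degree. I would argue this by first taking \emph{any} real fibered $g_0 \colon X \to \pp^1$ of some degree $m$ — whose existence on a dividing curve is the classical theorem of Ahlfors (a dividing/"type one" surface admits a proper holomorphic map to the disk, equivalently a real meromorphic function with all branch points real, equivalently $M(\R)$ bounds in $M_+$) — and then post-composing with the degree-$k$ real fibered self-map $\varphi_k \colon \pp^1 \to \pp^1$ from Example \ref{exp:interlace}. Since real fibered morphisms are closed under composition, $\varphi_k \circ g_0 \colon X \to \pp^1$ is real fibered of degree $km$, which exceeds $2\,\mathrm{genus}(X)+1$ for $k$ large; then $(\varphi_k \circ g_0)^*\cO_{\pp^1}(1)$ is very ample and Proposition \ref{prop:very_ample_hyperbolic} yields a hyperbolic embedding into some $\pp^d$. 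The one genuinely external input is Ahlfors' theorem characterizing dividing type by the existence of a real branched cover of $\pp^1$ branched only over real points; everything else is assembled from the composition-stability of real fibered morphisms, Example \ref{exp:interlace}, the degree criterion for very ampleness on curves, and Proposition \ref{prop:very_ample_hyperbolic}. I would flag the verification that Ahlfors' "exhausting function" translates into a genuinely finite flat \emph{algebraic} real fibered morphism (rather than merely a proper holomorphic one) as the point needing the most care, using that $X$ is projective so that holomorphic maps $M \to \pp^1(\C)$ commuting with conjugation are algebraic and automatically finite flat.
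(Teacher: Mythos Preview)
Your proof is correct and follows essentially the same route as the paper: Ahlfors' theorem produces a real fibered map to $\pp^1$ from a dividing curve, post-composition with the real fibered self-maps of $\pp^1$ from Example~\ref{exp:interlace} raises the degree until the pulled-back line bundle is very ample, and Proposition~\ref{prop:very_ample_hyperbolic} then gives the hyperbolic embedding. The only cosmetic differences are that the paper arranges the implications as $(i)\Leftrightarrow(ii)$ together with $(ii)\Rightarrow(iii)$ rather than as a cycle, and does not fuss over the connectedness of each half-plane preimage in $(ii)\Rightarrow(i)$ --- disconnectedness of $M\setminus M(\R)$ already suffices, since for a compact real curve this set has at most two components.
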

\begin{proof}
It is immediate that $(iii)$ implies $(ii)$.
The direction $(i)\Rightarrow(ii)$ is \cite[Thm. 7.1]{gabard}, see also \cite[\S 4.2]{Ahl50} for the original proof in the analytic setup. On the other hand, if $f: X \to \pp^1$ is a real fibered morphism, then $f(X(\C)\setminus X(\R))=\pp^1(\C)\setminus\pp^1(\R)$. Since $\pp^1(\C)\setminus\pp^1(\R)$ is not connected, it follows that $X(\C)\setminus X(\R)$ is not connected as well and hence $(i)$ is equivalent to $(ii)$.

Now let $f: X \to \mathbb{P}^1$ be real fibered and $\cL = f^* \cO_{\pp^1}(1)$. Note that $\cL$ is an ample line bundle on $X$ since $f$ is finite. In Example \ref{exp:interlace} we have seen that for any $n\geq 1$ there is a real fibered morphism $g: \pp^1 \to \pp^1$, such that $\cO_{\pp^1}(n) = g^* \cO_{\pp^1}(1)$ and for sufficiently large $n$ the line bundle $\cL^{n}=(g \circ f)^* \cO_{\pp^1}(1)$ is very ample. Thus by Proposition \ref{prop:very_ample_hyperbolic} $(ii)$ implies $(iii)$.
%
\end{proof}

\begin{rem}
 Theorem \ref{thm:curves_embedding} says that every weakly hyperbolic curve admits a hyperbolic embedding into some projective space. We will see in Example \ref{exp:doublecover} that this is not true for higher-dimensional varieties.
\end{rem}

\begin{cor} \label{cor:curves_embedding_low_dim}
 Let $X$ be a smooth projective curve of dividing type.
 Then $X$ admits a hyperbolic embedding into $\pp^3$ and a birational hyperbolic embedding into $\pp^2$.
\end{cor}
\begin{proof}
Assume that we can embed $X$ in $\pp^d$, such that the image is hyperbolic with respect to some $d-2$-dimensional real subspace $V \subset  \pp^d$. The tangent variety to $X$ is of dimension at most two and the secant variety is of dimension at most three. Thus if $d > 3$ we can find a real point in $V$ disjoint from the secant variety and project from it. In \cite[Thm.\ 3.10]{SV14} states that if $X$ is hyperbolic with respect to $V$, then there exists an open subset (in the classical topology) of the Grassmannian $\G(d-2,d)(\R)$ containing $V$, such that $X$ is hyperbolic with respect to any subspace in this subset. Thus we can perturb $V$ slightly if needed. We obtain an embedding of $X$ into $\pp^{d-1}$ hyperbolic with respect to the image of $V$. Hence we can embed $X$ hyperbolically into $\pp^3$.
We can repeat this argument in case $d = 3$ to get a finite map from $X$ into $\pp^2$, birational onto its image.
\end{proof}

\begin{rem}
 Let $X$ be a projective, geometrically irreducible, smooth, real curve. Let $g$ be its genus and let $s$
 be the number of connected components of $X(\R)$. If $s=g+1$, then $X$ will be of dividing type.
 If $X$ is of dividing type, then $g+1-s$ will be even \cite[\S 21]{Kl23}.
 \end{rem}
 
\begin{example}
  The TV-Screen is the plane quartic curve defined by $x^4+y^4-z^4$. Its genus is three and $X(\R)$ has only one connected component. Thus $g+1-s=3$ is odd and therefore the TV-Screen admits no hyperbolic embedding.
\end{example}

\begin{example}
 The Edge quartic is the plane quartic curve defined by \[25\cdot (x^4+y^4+z^4)-34\cdot (x^2y^2+x^2z^2+y^2z^2).\] The set of its real points has four connected components. Thus it is not hyperbolic with respect to any point in the plane. But it can be embedded hyperbolically into some $\pp^d$ since it is of dividing type. In fact, we can describe such an embedding concretely. Each of the ovals bounds a region in $\pp^2(\R)$ that is homeomorphic to a two-dimensional disc. Fix a point in the interior of each of these four regions and consider the pencil of quadrics that pass through all these four points. Each such quadric will intersect the curve in eight real points. Thus we get a real fibered morphism to $\pp^1$. This corresponds to a hyperbolic embedding of the Edge quartic into $\pp^5$ via the second Veronese embedding of $\pp^2$. It is hyperbolic with respect to the three-dimensional subspace of $\pp^5$ that is spanned by the image of our four chosen points. In order to explicitly compute a birational hyperbolic embedding into $\pp^2$, we choose $(-1:1:1),(1:-1:1),(1:1:-1)$ and $(1:1:1)$ as our four points. The three quadrics $$ x y + x z + y z+x^2 , x y + x z + y z + y^2, x y  + x z + y z+ z^2$$ are a basis of all quadrics vanishing on $(-1:1:1),(1:-1:1),(1:1:-1)$. Thus the image of the Edge quartic under the map to $\pp^2$ defined by those three quadrics is a plane hyperbolic curve. It is the zero set of the following symmetric polynomial: $$27\cdot(x^4 y^4+x^4 z^4+y^4 z^4) - 36\cdot(x^4 y^3 z +x^3 y^4 z+x^4 y z^3+ x y^4 z^3  +x^3 y z^4+x y^3 z^4)$$ $$ - 382\cdot(x^4 y^2 z^2+x^2 y^4 z^2+x^2 y^2 z^4) +      436\cdot(x^3 y^3 z^2   +    x^3 y^2 z^3 + x^2 y^3 z^3) .    $$

\end{example}
\begin{figure}[h]
 \includegraphics[width=4cm]{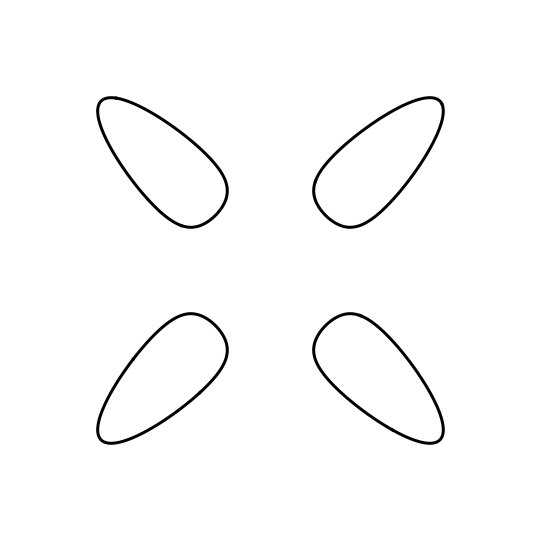} \quad
 \includegraphics[width=4cm]{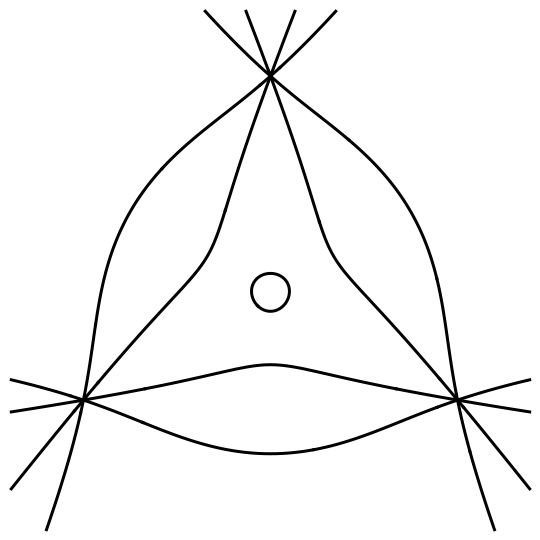}
\caption{The Edge quartic (on the left) and a birational hyperbolic embedding of it into $\pp^2$.}
\label{fig:edgecurves}
\end{figure}

 Let $B$ be a ring and let $A$ be a $B$-algebra which is a finitely generated free $B$-module. Any element $a\in A$ defines the $B$-endomorphism $m_a: A\to A, \, b\mapsto a\cdot b$. We define $\textrm{tr}(a)\in B$ to be the trace of the endomorphism $m_a$.

 Now let $B=\R$ and $A$ be an $\R$-algebra which is a finite-dimensional $\R$-vector space.
 Recall from \cite[Thm. 2.1]{counting} that then the trace
 bilinear form $A \times A \to \R, (f,g) \mapsto \textrm{tr}(fg)$
 is positive semidefinite if and only if $\Spec(A)$ consist only of $\R$-points.
 Similarly, if $K \subset  L$ is a field extension of degree $m$ and $P$ an ordering of $K$,
 then the signature (with respect to $P$) of the trace bilinear form $L \times L \to K, (f,g) \mapsto \textrm{tr}(fg)$ is the number of different extensions
 of $P$ to $L$, see also \cite[\S 8]{ERA}.
To characterize real fibered morphisms we have the following theorem:

\begin{thm} \label{thm:function_field_orede	}
Let $X$ and $Y$ be irreducible $\R$-varieties. Assume that $Y$ is smooth. Let $f \colon X \to Y$ be a finite, flat and surjective morphism, then the following are equivalent:
\begin{enumerate}[(i)]
\item The morphism $f$ is real fibered.
\item Every ordering of the function field $K$ of $Y$ has exactly $m$ extensions to the function field $L$ of $X$, where $m = [L:K]$ the degree of $f$.
\end{enumerate}
\end{thm}
\begin{proof}
Consider the $K$-bilinear form $b: L \times L \to K, \, (f,g) \mapsto \tr_{L/K} (f \cdot g)$. 
  For every point $y \in Y(\R)$ there exists an open affine  neighborhood $U \subset  Y$ of 
$y$, such that $f^{-1}(U) \subset  X$ is  affine and $\mathcal{O}_{X}(f^{-1}(U))$  
is a finite free module over $\mathcal{O}_{Y}(U)$. Let $A=\mathcal{O}_{Y}(U)$ and $B=\mathcal{O}_{X}(f^{-1}(U))$. Therefore, the trace map 
$\textrm{tr}_{L/K}: L \to K$ satisfies $\textrm{tr}_{L/K}(B) \subset  A$.  Now $(i)$ together with the above remark implies that the $A$-bilinear 
form  \[\mathfrak{b}: B \times B \to A, 
\,\, (a,b) \mapsto \textrm{tr}_{L/K}(ab)\]  is positive semidefinite at every point from $U(\R)$, 
thus it is positive semidefinite on $\Sper(A)$. Indeed, $U(\R)$ is dense in $\Sper(A)$ \cite[III \S 3, Thm. 7]{ERA} and therefore the closed set defined by the principal minors of the matrix associated to the bilinear form is everything.
In particular, the $K$-bilinear form  
\[b: L \times L \to K, \,\, (a,b) \mapsto \textrm{tr}_{L/K}(ab)\] is positive definite on $\Sper(K) \subset  \Sper(A)$, i.e., the signature of $b$ is $m$ for every ordering of $K$. By what has been said above this implies $(ii)$. 
In order to prove $(ii) \Rightarrow (i)$, assume that $f^{-1}(\{y\}) \not\subset  X(\R)$.  This means that the bilinear 
form $\mathfrak{b}$ is not positive semidefinite in $y$. Since $y$ is a smooth point of $Y$, 
the Artin--Lang Theorem \cite[Thm. 1.3]{Bec82} implies that $b$ is not positive semidefinite on $\Sper(K)$. 
\end{proof}

\begin{remark}
 The previous theorem is true even without the assumption of flatness \cite[Thm. 2.4.5]{kummerdiss} but for the sake of simplicity and since we will apply the theorem only to flat morphisms we do not give a proof here.
\end{remark}

For the following recall that we assume all curves to be equidimensional.

\begin{cor} \label{cor:real_fibered_normalization}
Let $f \colon X \to Y$ be a finite surjective morphism of curves over $\R$.
Assume that $Y$ is smooth, then $f$ is real fibered if and only if $f \circ \pi$ is real fibered, where $\pi \colon \tilde{X} \to X$ is the normalization map.
\end{cor}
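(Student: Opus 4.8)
The plan is to deduce the statement from Theorem~\ref{thm:function_field_orede}, which disposes of both implications at once, since it characterizes real-fiberedness purely in terms of the induced extension of function fields, and $X$ and $\tilde X$ have the same function field. Before invoking it I would check that ``real fibered'' is even meaningful here, i.e.\ that $f$ and $f\circ\pi$ are flat: since $X$ is a reduced curve it is Cohen--Macaulay, and $Y$ is smooth hence regular, so the finite surjective morphism $f$ is flat (\cite[Prop.~15.4.2]{EGAIV3}, as used above); the normalization $\pi\colon\tilde X\to X$ is finite and surjective, and $\tilde X$ is a normal curve, hence smooth, so $f\circ\pi$ is finite and surjective, and flat by the same criterion applied with source $\tilde X$.

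Next I would reduce to the case that both $X$ and $Y$ are irreducible. As $Y$ is smooth, its connected components $Y_1,\dots,Y_r$ are irreducible and open; putting $Z_i:=f^{-1}(Y_i)$ we get disjoint decompositions $X=\bigsqcup_i Z_i$ and, since normalization commutes with such decompositions, $\tilde X=\bigsqcup_i\tilde Z_i$, so $f$ (resp.\ $f\circ\pi$) is real fibered if and only if each of its restrictions over a $Y_i$ is. Hence we may assume $Y$ irreducible. If $X_1,\dots,X_s$ are the irreducible components of $X$, then each $f|_{X_j}\colon X_j\to Y$ is finite and surjective (its image is closed of dimension one, hence all of $Y$), hence flat; and because passing to the closed subscheme $X_j$ alters neither the residue field nor the image of a point, $f$ is real fibered if and only if every $f|_{X_j}$ is. Likewise $\tilde X=\bigsqcup_j\tilde X_j$ is a genuine disjoint union, with $\tilde X_j$ the normalization of $X_j$, so $f\circ\pi$ is real fibered if and only if every $f|_{X_j}\circ\pi_j$ is, where $\pi_j\colon\tilde X_j\to X_j$ is the normalization. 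Thus we may assume $X$ irreducible as well.

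Now $X$ and $\tilde X$ are irreducible with one and the same function field $L$, the map $\pi$ inducing the identity on $L$; hence $f\colon X\to Y$ and $f\circ\pi\colon\tilde X\to Y$ induce the same field extension $K:=\R(Y)\hookrightarrow L$, say of degree $m$. By Theorem~\ref{thm:function_field_orede} (whose only hypotheses, besides $f$ being finite, flat and surjective, are that the target be smooth and that source and target be irreducible), $f$ is real fibered if and only if every ordering of $K$ has exactly $m$ extensions to $L$ --- and this very same condition characterizes real-fiberedness of $f\circ\pi$. Therefore $f$ is real fibered if and only if $f\circ\pi$ is. I do not expect a genuine obstacle here, as all the content sits in Theorem~\ref{thm:function_field_orede}; the one point needing a little care is the reduction above, in particular that real-fiberedness of $f$ is detected on the (possibly mutually intersecting) irreducible components of $X$. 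For the record, the implication ``$f\circ\pi$ real fibered $\Rightarrow$ $f$ real fibered'' is also elementary and needs no theorem: if $f(x)\in Y(\R)$, choose $\tilde x\in\pi^{-1}(x)$; then $(f\circ\pi)(\tilde x)=f(x)\in Y(\R)$ forces $\kappa(\tilde x)=\R$, and $\kappa(x)$ is a subfield of $\kappa(\tilde x)=\R$ containing $\R$, hence equals $\R$.
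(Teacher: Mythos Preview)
Your argument is correct and follows essentially the same route as the paper: reduce to the irreducible case, observe that both $f$ and $f\circ\pi$ are flat since curves are Cohen--Macaulay and $Y$ is smooth, and then invoke Theorem~\ref{thm:function_field_orede	} together with the fact that $X$ and $\tilde X$ share the same function field. Your write-up is more detailed than the paper's (which dispatches the reduction and flatness check in one sentence each), and your closing elementary remark on the implication $f\circ\pi$ real fibered $\Rightarrow$ $f$ real fibered is a nice addition but not needed for the main line.
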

\begin{proof}
Without loss of generality, we can assume that $X$ and $Y$ are irreducible.
Since all $\R$-varieties of dimension one are Cohen--Macaulay \cite[Ex. 2.1.20]{BrHr93} we have that both $f$ and $f \circ \pi$ are flat. Now since the function fields of $X$ and of $\tilde{X}$ are the same, the claim follows immediately from the above theorem.
\end{proof}

\begin{prop} \label{prop:real_fibered_unrmf_curves}
Let $f \colon X \to Y$ be a finite surjective morphism of curves over $\R$. Let $p \in X(\R)$ be such that $Y$ is smooth at $f(p)$. If the differential $\diff_p f \colon \textnormal{T}_p X \to \textnormal{T}_{f(p)} Y$ at $p$ is the zero map, then $f$ is not real fibered.
\end{prop}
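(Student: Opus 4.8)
The plan is to reduce, by routine localization and normalization arguments, to a purely local-analytic statement about a smooth curve, and then to derive a contradiction from the mismatch between the number of real sheets and the number of complex sheets of $f$ near $p$.

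\emph{First I would reduce to the case that $X$ and $Y$ are smooth curves.} Since real fiberedness passes to the restriction $f^{-1}(U)\to U$ for any open $U\subseteq Y$ (immediate from Definition~\ref{dfn:real_fibered}), and since it suffices to prove $f$ is \emph{not} real fibered, I may shrink $Y$ to a smooth open neighbourhood of $q:=f(p)\in Y(\R)$ and replace $X$ by $f^{-1}(Y)$; so assume $Y$ is smooth. Then I would pass to the normalization $\pi\colon\tilde X\to X$; by Corollary~\ref{cor:real_fibered_normalization} it is enough to show that $f\circ\pi$ is not real fibered. Pick $\tilde p\in\pi^{-1}(p)$. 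If $\tilde p\notin\tilde X(\R)$ this is clear, since $(f\circ\pi)(\tilde p)=q\in Y(\R)$. Otherwise $\tilde p\in\tilde X(\R)$, and $\tilde X$, being the normalization of a curve over the perfect field $\R$, is smooth, while $\diff_{\tilde p}(f\circ\pi)=\diff_p f\circ\diff_{\tilde p}\pi=0$. So from now on I assume $X$ is a smooth curve, $p\in X(\R)$, $Y$ is smooth, and $\diff_p f=0$.

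\emph{Next I would work in the classical topology near the complex point $\bar p\in X(\C)$ over $p$.} Choose conjugation-equivariant holomorphic coordinates $s$ on a disc $D\subseteq X(\C)$ centred at $\bar p$ (so $D\cap X(\R)$ corresponds to $\{s\in\R\}$) and $t$ near $\bar q\in Y(\C)$. Then $f$ is given near $\bar p$ by a convergent power series $t=\phi(s)$ with real coefficients (as $f$ is defined over $\R$), $\phi(0)=0$, $\phi'(0)=0$ (the latter because $\diff_p f=0$), and $\phi\not\equiv 0$ (because $f$ is finite and surjective). Set $e:=\operatorname{ord}_0\phi\geq 2$, so $\phi(s)=c\,s^e+O(s^{e+1})$ with $c\in\R^\times$. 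By Rouch\'e's theorem there are $r,\delta>0$ such that for every $t_0$ with $0<|t_0|<\delta$ the equation $\phi(s)=t_0$ has exactly $e$ (automatically simple) solutions in $\{|s|<r\}$. On the other hand $\phi|_{(-r,r)}$ has a critical point only at $0$: if $e$ is even it has a strict local extremum there, so for $t_0\in\R$ small with $\operatorname{sign}(t_0)=-\operatorname{sign}(c)$ none of the $e$ solutions is real (yet $t_0$ lies in the image of $f$, by the open mapping theorem); if $e$ is odd, hence $e\geq 3$, then $\phi$ is strictly monotone on $(-r,r)$, so for any small nonzero real $t_0$ at most one of the $e\geq 3$ solutions is real. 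In either case some solution $s_0$ of $\phi(s_0)=t_0$ is non-real, and the corresponding closed point $x$ of $X$ — the $\tau$-orbit $\{s_0,\bar s_0\}$, with $\kappa(x)=\C$ — satisfies $x\notin X(\R)$ while $f(x)$ is the real point with coordinate $t_0$, so $f(x)\in Y(\R)$. Hence $f$ is not real fibered.

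\emph{Where the work is.} The reductions and the power-series estimates are routine; the step deserving care is the dictionary between the analytic and scheme-theoretic pictures — that a non-real solution $s_0$ of $\phi(s_0)=t_0$ genuinely yields a closed point of $X$ with residue field $\C$ lying over the real point $t_0$, and that for even $e$ the ``wrong-sign'' value $t_0$ is actually attained by $f$. Conceptually everything is driven by the fact that $\diff_p f=0$ forces the local degree $e$ of $f$ at $p$ to be at least $2$: over $\R$ a branch point of even order covers only one side of its image, and one of odd order $\geq 3$ is not locally injective on real points, so in both cases the real fibre near $p$ is too small to carry all $e$ complex sheets.
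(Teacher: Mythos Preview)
Your proof is correct, and your reduction to the smooth case via Corollary~\ref{cor:real_fibered_normalization} matches the paper's. The core argument, however, is genuinely different. The paper works with the real spectrum: after reducing to $X$ smooth and affine, it invokes the Baer--Krull theorem to count the points of $\Sper A$ with support zero specializing to $p$ (exactly two, since $X$ is smooth), combines Theorem~\ref{thm:function_field_orede	} with real going-up to force $2d$ such points over $f(p)$, and derives a pigeonhole contradiction from the fact that $f^{-1}(f(p))$ has at most $d-1$ points. Your route is the complex-analytic one the paper itself alludes to in the remark following the proposition: pass to $X(\C)$, expand $f$ in real local coordinates, and use Rouch\'e together with the even/odd parity dichotomy on the local order $e\geq 2$ to exhibit a non-real sheet over a nearby real value. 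Your approach is more elementary and self-contained (no real spectrum machinery), while the paper's argument is purely algebraic and would carry over verbatim to curves over an arbitrary real closed base field. One minor comment: the appeal to the open mapping theorem in the even case is superfluous, since the non-real solutions produced by Rouch\'e already witness the failure of real-fiberedness.
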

\begin{proof}
Without loss of generality we can assume that $Y$ is smooth and that $X$ and $Y$ are irreducible and affine. Let $\pi: \tilde{X} \to X$ be the normalization map and let $q \in \tilde{X}(\R)$ be any point, such that $\pi(q)=p$ (we can assume that such a point $q$ exists since otherwise $f \circ \pi$ and thus $f$ would fail to be a real fibered morphism).  We have $\diff_q(f \circ \pi)=\diff_p f \circ \diff_q \pi=0$.  Thus by the preceding corollary, we can further restrict to the case where $X$ is smooth.  

Let $X=\Spec(A)$ and $Y=\Spec(B)$ and let $\tilde{f}: \Sper(A) \to \Sper(B)$ be the induced map between the real  spectra. Let $d$ be the degree of $f$.  By the Baer--Krull Theorem \cite[Thm. 2.2.5]{Eng05} there are two distinct points  $\alpha_1, \alpha_2 \in\Sper(B)$ with  support zero that specialize to $f(p)$.   If $f$ is real fibered the theorem above implies that there are $2d$ distinct points in the preimage    $\tilde{f}^{-1}(\{\alpha_1, \alpha_2\})$.   By real going-up \cite[Thm. 4.3]{And96} these specialize to points in the preimage of $f(p)$.    But since $f$ is ramified at $p$, there are at most   $d-1$ points in the preimage of $f(p)$. By the pigeonhole principle, we thus have at least one point   in $\Sper(A)$ to which at least three distinct points with support zero specialize. But since   $X$ is smooth  this contradicts the Baer--Krull Theorem. 
\end{proof}

\begin{rem}
The fact that a real fibered morphism $f \colon X \to \pp^1$, where $X$ is a smooth curve over $\R$, is unramified at real points can be easily seen using complex analysis. Consider $f$ as a meromorphic function on $X(\C)$ and consider its Laurent expansion in some real local coordinate around a zero to see that it has to be a simple zero.
\end{rem}

The following theorem has been proved in several special cases like for hyperbolic hypersurfaces \cite{HelVin07} or reciprocal linear spaces \cite{SSV13}.
However, their methods do not generalize to the case of arbitrary real fibered morphisms.

\begin{thm} \label{thm:real_fibered_unrmf}
Let $f \colon X \to Y$ be a real fibered morphism between two $\R$-varieties.
Let $p \in X(\R)$ and $q=f(x)\in Y(\R)$ be smooth points.
Then the differential $\diff_p f \colon \textnormal{T}_p X \to \textnormal{T}_{q} Y$ at $p$ is an isomorphism.
\end{thm}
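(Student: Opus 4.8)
The plan is to reduce to a real fibered morphism of \emph{curves}, to which Proposition~\ref{prop:real_fibered_unrmf_curves} applies, by slicing the target with a smooth curve through $q$ chosen so that the differential of the sliced morphism degenerates completely. First I would pass to the local picture: shrinking $Y$ to a smooth affine open neighbourhood of $q$ and replacing $X$ by its preimage (affine, as $f$ is finite) changes neither the hypotheses nor real fiberedness. Since $f$ is finite and flat, $\cO_{X,p}$ is finite and flat over $\cO_{Y,q}$, so $\dim_p X = \dim_q Y =: n$; as $p$ and $q$ are smooth with residue field $\R$, both $T_pX$ and $T_qY$ are $n$-dimensional $\R$-vector spaces. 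It therefore suffices to prove that $\diff_p f$ is injective, and I argue by contradiction: assume $\ker \diff_p f \neq 0$. Then $\diff_p f$ is also not surjective, and I fix a tangent vector $w \in T_qY \setminus \im \diff_p f$.

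Next I would construct the slice. Using smoothness of $Y$ at $q$, choose $g_2,\dots,g_n$ in the ideal of $q$ whose classes in $\mathfrak{m}_q/\mathfrak{m}_q^2 \cong T_qY^{*}$ form a basis of the hyperplane annihilating $w$; being linearly independent there, they are part of a regular system of parameters, so their common zero locus is smooth of dimension one near $q$. Let $D$ be the irreducible component through $q$, with reduced structure; then $D$ is a curve over $\R$, smooth at $q$, with $T_qD = \langle w \rangle$. Let $\bar p$ be the point of the scheme-theoretic preimage $f^{-1}(D)$ over $p$ (it exists since $f(p)=q\in D$), and note that near $\bar p$ the preimage is Cohen--Macaulay of pure dimension one, because $g_2,\dots,g_n$ pull back to a regular sequence in the regular ring $\cO_{X,p}$; in particular $\bar p$ lies on components of $f^{-1}(D)$ that dominate $D$. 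Let $C$ be the union of all such dominating components, with reduced structure, so that $\bar p \in C(\R)$. I claim $f$ restricts to a real fibered finite surjective morphism of curves $f_C \colon C \to D$: finiteness and surjectivity are clear from the construction; flatness holds because $C$ is a curve over $\R$, hence Cohen--Macaulay, and $D$ is smooth, as in the proof of Corollary~\ref{cor:real_fibered_normalization}; and the real fibered condition is inherited because $f^{-1}(D)\to D$ is the base change of $f$ along $D\hookrightarrow Y$, while passing to the reduction and to the dominating components changes neither the underlying points nor their residue fields.

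Finally I would compute the differential. Since $\cO_{C,\bar p}$ is a quotient of $\cO_{X,p}/(f^{*}g_2,\dots,f^{*}g_n)$, one gets $T_{\bar p} C \subseteq (\diff_p f)^{-1}(T_q D) = (\diff_p f)^{-1}(\langle w\rangle)$, and this is exactly $\ker \diff_p f$ because $\langle w\rangle \cap \im \diff_p f = 0$. Hence $\diff_p f$ vanishes on $T_{\bar p} C$; since $\diff_{\bar p} f_C$ followed by the inclusion $T_qD \hookrightarrow T_qY$ is the composite $T_{\bar p}C\hookrightarrow T_pX \xrightarrow{\diff_p f} T_qY$, this means the differential of $f_C$ at $\bar p$ is the zero map, while $D$ is smooth at $f_C(\bar p)=q$. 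Proposition~\ref{prop:real_fibered_unrmf_curves} then forces $f_C$ not to be real fibered, contradicting the previous paragraph. Therefore $\ker \diff_p f = 0$, so $\diff_p f$ is an isomorphism.

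I expect the crux to lie in the second paragraph: checking that the one-dimensional slice $C$ retains \emph{all four} of finiteness, flatness, surjectivity and real fiberedness -- the last being where stability of real fiberedness under base change, together with its insensitivity to nilpotents and to discarding components that miss $\bar p$, is genuinely used -- and arranging the slice (via the choice $w \notin \im \diff_p f$) so that the differential of $f_C$ degenerates \emph{completely} rather than merely acquiring a kernel, which is precisely what allows the curve case to be invoked.
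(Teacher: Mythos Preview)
Your proposal is correct and follows essentially the same route as the paper: argue by contradiction, slice $Y$ by a curve through $q$ whose tangent line avoids $\im \diff_p f$, pull back, and invoke Proposition~\ref{prop:real_fibered_unrmf_curves}. The paper's own proof is two sentences and leaves implicit exactly the scheme-theoretic points you spell out (reducedness of the pulled-back curve, why the components through $\bar p$ dominate, flatness via Cohen--Macaulayness over a smooth base, and the computation $(\diff_p f)^{-1}(T_qD)=\ker\diff_p f$); your added care is justified since the paper's conventions require curves to be reduced varieties.
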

\begin{proof}
 Assume  that the differential $\diff_p f$ of $f$ at $p$ is not surjective. Let $C \subset  Y$ be a curve over $\R$ which is smooth at $q$ and whose tangent space intersects the  image of $\diff_p f$ trivially. Let $C' =X \times_Y C$ be the fiber of $f$ over $C$ which is again a curve. The induced map $ C' \to C$ is real fibered and its differential at $p$ is zero. This contradicts the preceding lemma. 
\end{proof}
%

The following corollary is a partial generalization of \cite[Thm.\ 5.2]{HelVin07}.

\begin{cor} \label{cor:real_point_structure}
Assume that $X$ is a smooth weakly hyperbolic $\R$-variety of dimension $k \geq 2$ with $f \colon X \to \pp^k$ being a real fibered morphism. 
Then $X(\R)$ is a disjoint union of $s$ components homeomorphic to 
$S^k$ and $r$ components homeomorphic to $\pp^k(\R)$ where $2s+r=\deg f$ (both with respect to the classical topology on $X(\R)$).
In particular, every other real fibered morphism $X \to \pp^k$ has to be of degree $\deg f$.
\end{cor}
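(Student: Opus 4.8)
The plan is to pass to the real loci equipped with the classical topology and to recognise the restriction of $f$ there as a finite covering map of $\pp^k(\R)$. Put $d=\deg f$ and let $f_\R\colon X(\R)\to\pp^k(\R)$ be the restriction of the continuous map $X(\C)\to\pp^k(\C)$; this is well defined because $f$ is defined over $\R$. Since $f$ is finite, $X$ is projective over $\R$, so (using that $X$ is smooth) $X(\R)$ is a compact smooth manifold, while $\pp^k(\R)$ is a compact connected smooth manifold. As a real fibered morphism $f$ is finite, flat and surjective, so $f_*\cO_X$ is locally free of rank $d$ over the connected variety $\pp^k$; moreover $f_\R$ is surjective, because for $y\in\pp^k(\R)$ the nonempty fibre $f^{-1}(y)$ is contained in $X(\R)$ by the real fibered property.

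First I would show that $f_\R$ is a covering map with exactly $d$ sheets. Every $p\in X(\R)$ is a smooth point of $X$ mapped to the smooth point $f(p)$ of $\pp^k$, so Theorem \ref{thm:real_fibered_unrmf} gives that $\diff_p f$ is an isomorphism, and by the inverse function theorem $f_\R$ is a local diffeomorphism. Being a continuous map from a compact space to a Hausdorff space it is proper, and a proper local homeomorphism between manifolds is a covering map, with finitely many sheets since $\pp^k(\R)$ is connected. To count the sheets, fix $y\in\pp^k(\R)$: the scheme fibre $f^{-1}(y)$ has length $d$ over $\kappa(y)=\R$; it is contained in $X(\R)$, and at each of its points $f$ is unramified (again Theorem \ref{thm:real_fibered_unrmf}, these being smooth points of $X$ over the smooth point $y$), so the fibre is reduced and hence equals a disjoint union of exactly $d$ copies of $\Spec\R$. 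Thus $f_\R$ has $d$ sheets.

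Next I would classify the connected components. For $k\geq 2$ the manifold $\pp^k(\R)$ is connected with $\pi_1\cong\Z/2\Z$, so its only connected coverings are $\pp^k(\R)$ itself and the simply connected double cover $S^k$, and these are not homeomorphic. If $Z$ is a connected component of $X(\R)$, then $Z$ is closed in the compact $X(\R)$, hence compact, so $f_\R(Z)$ is closed; it is also open since $f_\R$ is a covering map; therefore $f_\R(Z)=\pp^k(\R)$ and $Z\to\pp^k(\R)$ is a connected covering. Consequently $Z\cong\pp^k(\R)$ (one sheet) or $Z\cong S^k$ (two sheets). Writing $r$ and $s$ for the numbers of components of the two types and summing sheet numbers over a point of $\pp^k(\R)$, we obtain $r+2s=d=\deg f$, which is the assertion.

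For the final statement, observe that for $k\geq 2$ the integers $r$ and $s$ depend only on the homeomorphism type of the topological space $X(\R)$, so running the same argument for any real fibered morphism $g\colon X\to\pp^k$ yields $\deg g=2s+r$; hence all real fibered morphisms $X\to\pp^k$ have the common degree $2s+r=\deg f$. The step that requires genuine input is the identification of $f_\R$ as a covering with precisely $d$ sheets: it combines Theorem \ref{thm:real_fibered_unrmf} (for the local-diffeomorphism property) with the schematic fact that finite flatness together with reducedness of the fibres at real points forces every real fibre to consist of $d$ reduced real points. Once this is in place, the classification of coverings of $\pp^k(\R)$ and the bookkeeping are routine.
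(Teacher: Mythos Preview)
Your argument is correct and follows essentially the same route as the paper: use Theorem~\ref{thm:real_fibered_unrmf} to see that $f_\R$ is a local diffeomorphism, deduce that it is a covering of $\pp^k(\R)$ with $\deg f$ sheets, and then invoke $\pi_1(\pp^k(\R))\cong\Z/2\Z$ for $k\geq 2$ to classify the connected components. You have simply written out in full the details (properness plus local homeomorphism gives a covering, unramifiedness forces reduced fibres of cardinality $\deg f$, and $r,s$ are topological invariants of $X(\R)$) that the paper's proof leaves implicit.
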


\begin{proof}
By the above theorem $X(\R)$ is a covering space of $\pp^k(\R)$ with $\deg f$ many sheets. Since $ k \geq 2$ we have that $\pi_1(\pp^k(\R)) \cong \Z/2$. 
Hence every connected component of $X(\R)$ is either homeomorphic to $S^k$ or to $\pp^k(\R)$ and the formula $2s+r=\deg f$
follows from counting the sheets.
\end{proof}

We have seen in Example \ref{exp:interlace} that the rational normal curve is always hyperbolic. This fails for Veronese varieties of higher dimension.

\begin{cor} \label{cor:veronese_not_hyperbolic}
If $X = \cV_{m}(\pp^k)$ is the Veronese embedding of $\pp^k$ ($k,m \geq 2$) into $\pp^N$, where $N = \binom{ k + m}{  m} -1$, then $X$ is not hyperbolic, i.e., there is no real linear subspace $V \subset  \pp^N$ of dimension $N - k - 1$, such that $V \cap X = \emptyset$ and $X$ is hyperbolic with respect to $V$.
\end{cor}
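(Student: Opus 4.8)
The plan is to assume the contrary and derive a contradiction from Corollary~\ref{cor:real_point_structure} together with an elementary degree count, so the argument will be short. Suppose there is a real linear subspace $V \subseteq \pp^N$ of dimension $N-k-1$ with $V \cap X = \emptyset$ such that $X = \cV_{m}(\pp^k)$ is hyperbolic with respect to $V$. Since the Veronese embedding is defined over $\R$ and is an isomorphism onto its image, $X \cong \pp^k$; in particular $X$ is smooth, hence equidimensional and Cohen--Macaulay. Therefore the discussion following Definition~\ref{dfn:hyperbolic} applies: the linear projection with center $V$ restricts to a finite, flat and surjective real fibered morphism $f \colon X \to \pp^k$. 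Thus $X$ is a smooth weakly hyperbolic $\R$-variety of dimension $k \geq 2$, with $f$ the real fibered morphism.

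Next I would invoke Corollary~\ref{cor:real_point_structure}, which gives that $X(\R)$ is a disjoint union of $s$ copies of $S^k$ and $r$ copies of $\pp^k(\R)$ with $2s + r = \deg f$. On the other hand, the Veronese isomorphism restricts to a homeomorphism of $\pp^k(\R)$ onto $X(\R)$ for the classical topology, so $X(\R)$ is connected and, since $k \geq 2$, is homeomorphic to $\pp^k(\R)$ and not to $S^k$ (for instance because $\pi_1(\pp^k(\R)) \cong \Z/2$ is nontrivial while $\pi_1(S^k) = 0$). Hence necessarily $s = 0$ and $r = 1$, which forces $\deg f = 1$.

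Finally I would contradict this by computing $\deg f$ directly. The degree of the morphism obtained by projecting a projective variety from a linear center disjoint from it equals the degree of the variety; hence $\deg f = \deg X = \deg \cV_{m}(\pp^k) = m^k$, which is $\geq 4$ since $k, m \geq 2$. (Alternatively: $\deg f = 1$ would make $f$ birational, hence, being finite onto the normal variety $\pp^k$, an isomorphism, and then $f^*\cO_{\pp^k}(1) = \cO_X(1)$ would correspond under the Veronese isomorphism to $\cO_{\pp^k}(m)$, which is impossible for $m \geq 2$ since an automorphism of $\pp^k$ fixes the hyperplane class.) Either way we contradict $\deg f = 1$, so no such $V$ exists.

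I do not expect a genuine obstacle here: Corollary~\ref{cor:real_point_structure} carries all the weight. The only points that need a sentence of justification are that $X(\R) \cong \pp^k(\R)$ is connected and not a sphere, and that the degree of the projection $f$ equals $\deg \cV_{m}(\pp^k) = m^k$; both are standard, and the hypotheses are used exactly where one expects, namely $k \geq 2$ for the topology and for applying Corollary~\ref{cor:real_point_structure}, and $k, m \geq 2$ to make $m^k > 1$. The one thing to be careful about is not conflating $X$ with $\pp^k$ when tracking line bundles in the alternative degree argument.
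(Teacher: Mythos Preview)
Your proof is correct and follows the same approach as the paper: assume hyperbolicity, note that the linear projection from $V$ is a real fibered morphism $f\colon X\to\pp^k$ of degree $m^k$, and obtain a contradiction from Corollary~\ref{cor:real_point_structure} using $X(\R)\cong\pp^k(\R)$. The paper's proof is a single sentence invoking the preceding corollary; your version simply spells out explicitly why $2s+r=1$ forces $\deg f=1$, which is exactly the implicit step.
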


\begin{proof}
Suppose towards a contradiction that $X$ is hyperbolic with respect to some real linear subspace $V \subset  \pp^d$ of dimension $N - 1 - k$. Then the projection from $V$ is a real fibered morphism $f \colon X \to \pp^k$ of degree $m^k$. On the other hand, we have that $X(\R)$ is (homeomorphic to) $\pp^k(\R)$. Using the notation of the previous corollary, this means that $s=0$ and $r=1$ and therefore $\deg f=2s+r=1$ by the statement of the preceding corollary. Since $k,m \geq 2$, this is a contradiction.
\end{proof}

Two bivariate homogeneous forms with real coefficients of the same degree that interlace have the property that every nonzero polynomial in their span over $\R$ has only real zeros. One can ask whether such a phenomenon exists for forms in more than two variables. The next corollary shows that this is not possible. It might be known to experts on zero-dimensional systems, however, we have not found any such result in the literature.

\begin{cor}
 There are no $d+1$ homogeneous forms $f_0,\ldots, f_d \in \R[x_0,\ldots,x_d]$ of degree $m$ for $d,m>1$, without common zeros, such that every $d$ linearly independent forms in their span over $\R$ have just real common zeros.
\end{cor}

\begin{proof}
 If there were such polynomials $f_0,\ldots, f_d \in \R[x_0,\ldots,x_d]$, then the morphism $f:\pp^d\to\pp^d,\,x\mapsto (f_0(x):\cdots:f_d(x))$ would be real fibered. Since $f$ can be written as the composition of the Veronese embedding with a linear projection, this would imply that $\cV_{m}(\pp^d)$ is hyperbolic which is not possible by Corollary \ref{cor:veronese_not_hyperbolic}.
\end{proof}

\begin{example} \label{exp:doublecover}
 In this example, we will consider a double cover of the projective plane branched along a smooth curve of degree $2m$ without real points.
 We will show that if $m \geq 2$ this is a weakly hyperbolic variety that does not admit a hyperbolic embedding into some projective space.
 
 Let $p \in \mathbb{R}[x_0,x_1,x_2]$ be a positive definite, homogeneous polynomial of degree $2m$, such that the curve in $\mathbb{P}^2$ defined by $p$ is smooth.
Let $X$ be the hypersurface defined by $ y^2=p(x_0,x_1,x_2)$
in the weighted projective space $\mathbb{P}(1,1,1,m)$ where 
$x_0,x_1,x_2$ are homogeneous coordinates of weight $1$ and $y$ is a homogeneous coordinate of weight $m$.
We have that $X$ is a smooth projective variety
and the projection morphism $f: X \to \mathbb{P}^2$ on the first three coordinates is finite 
of degree two.
Moreover, it is real fibered since $p$ is positive definite.
If there was any hyperbolic embedding $\iota$ of $X$ into some projective space, there would be a real fibered linear projection
from $\iota(X)$ to $\pp^2$. By Corollary \ref{cor:real_point_structure} this would also have to be of degree two.
This means that $X$ would be isomorphic to a smooth quadric surface $Q$ in $\mathbb{P}^3$.
For example, comparing Hodge numbers shows that this cannot be, since we have $\textrm{h}^{1,1}(Q)=1$, but $\textrm{h}^{1,1}(X) \neq 1$ for $m \geq 2$
(see for example \cite[Chapter 17]{ara12}).
\end{example}


The previous example shows the existence of a weakly hyperbolic surface which cannot be embedded hyperbolically to some $\pp^n$ but rather to some weighted projective space. In the following, we show that this can always be done, when the real fibered morphism is flat.

\begin{lem}
Let $Y = \pp(\underbrace{1,\ldots,1}_{k+1},\underbrace{n,\ldots,n}_{d-k})$ be the weighted projective space. Let us write $y = (y_0,y_1,\ldots,y_d)$ and set $V = \left\{y \in Y \mid y_0 = y_1 = \cdots = y_k = 0\right\}$. Consider the projection from $V$, namely $f \colon Y \smallsetminus V \to \pp^k$. This map realizes $Y \smallsetminus V$ as the total space of $\cO_{\pp^k}(n)^{\oplus (d-k)}$
\end{lem}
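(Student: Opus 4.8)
The plan is to verify the isomorphism chart-by-chart over the standard affine cover of $\pp^k$ and to check that the gluing data for $Y \smallsetminus V$ matches the gluing data of the total space of $\cO_{\pp^k}(n)^{\oplus(d-k)}$. Write $Y = \Proj R$ with $R = \R[y_0,\ldots,y_d]$ graded so that $\deg y_i = 1$ for $i \le k$ and $\deg y_j = n$ for $j > k$, and let $S = \R[y_0,\ldots,y_k]$ with its standard grading. Since $V = V_+(y_0,\ldots,y_k)$, we have $Y \smallsetminus V = \bigcup_{i=0}^{k} D_+(y_i)$, and the degree-preserving inclusion $S \hookrightarrow R$ induces the projection $f \colon Y \smallsetminus V \to \pp^k = \bigcup_{i=0}^{k} D_+(y_i)$.

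First I would compute, for each $i \in \{0,\ldots,k\}$, the degree-zero part of the localization. Every degree-zero element of $R[y_i^{-1}]$ is a polynomial in the elements $u_l = y_l/y_i$ (for $l \le k$, $l \ne i$) and $w_j = y_j/y_i^{\,n}$ (for $j > k$), and conversely these $d$ elements are algebraically independent over $\R$, so $R[y_i^{-1}]_0 = \R[u_l : l \le k,\, l \ne i][w_j : j > k]$, a polynomial ring. This identifies $f^{-1}(U_i) = D_+(y_i)$ with $U_i \times \A^{d-k}$, where $U_i = \Spec \R[u_l] \cong \A^k$ is the $i$-th standard chart of $\pp^k$, with $f$ becoming the first projection and $w_{k+1},\ldots,w_d$ the fiber coordinates. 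The one point to be careful about is that the weighted $\Proj$ hides no extra relations: this follows by writing an arbitrary degree-zero monomial $\prod_l y_l^{a_l}\prod_j y_j^{b_j}\,y_i^{-c}$ (with $\sum_{l \le k} a_l + n\sum_{j>k} b_j = c$) as $\prod_{l \ne i} u_l^{a_l} \prod_{j>k} w_j^{b_j}$.

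Next I would record the transition functions. On $D_+(y_i) \cap D_+(y_{i'})$ with $i, i' \le k$, the fiber coordinates satisfy $w_j^{(i)} = y_j/y_i^{\,n} = (y_{i'}/y_i)^n\,(y_j/y_{i'}^{\,n}) = (y_{i'}/y_i)^n\, w_j^{(i')}$, so the $w_j$ transform by the cocycle $(y_{i'}/y_i)^n$ — the same cocycle in each of the $d-k$ slots — which is exactly the transition cocycle defining the total space of $\cO_{\pp^k}(n)$ on the standard cover. Equivalently, assembling the charts shows $f_*\cO_{Y \smallsetminus V} \cong \mathrm{Sym}_{\cO_{\pp^k}}\!\big(\cO_{\pp^k}(-n)^{\oplus(d-k)}\big)$ as sheaves of graded $\cO_{\pp^k}$-algebras, so $Y \smallsetminus V \cong \Spec_{\pp^k} \mathrm{Sym}\big(\cO_{\pp^k}(-n)^{\oplus(d-k)}\big)$, which is by definition the total space of $\cO_{\pp^k}(n)^{\oplus(d-k)}$ over $\pp^k$; the chartwise identifications are compatible with $f$ and with the gluing, hence glue to a global isomorphism over $\pp^k$.

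I expect the main obstacle to be purely bookkeeping rather than conceptual: one must fix a convention for "total space of a vector bundle" (sections of $\cE$ versus $\Spec \mathrm{Sym}\,\cE^\vee$) and then make sure the exponent $n$ and the direction of the cocycle $(y_{i'}/y_i)^n$ are consistent with it. Once the convention is pinned down, the two cocycles coincide and the isomorphism is immediate. A secondary sanity check worth including is the special case $k$ large or $n=1$, where the statement reduces to the familiar description of $\pp^d \smallsetminus \pp^{d-k-1}$ as the total space of $\cO_{\pp^k}(1)^{\oplus(d-k)}$.
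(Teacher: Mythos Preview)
Your proposal is correct and follows essentially the same approach as the paper: compute the standard affine charts $D_+(y_i)$ for $i \le k$, identify the fiber coordinates as $y_j/y_i^{\,n}$ for $j>k$, and read off the diagonal transition cocycle $(y_{i'}/y_i)^n$ as that of $\cO_{\pp^k}(n)^{\oplus(d-k)}$. Your version is more careful about the $\Proj$ formalism and the ``total space'' convention, but the argument is the same.
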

\begin{proof}
Clearly, we have that $Y \smallsetminus V$ is a total space of a vector bundle over $\pp^k$ since over each point we perform coordinate-wise addition in the coordinates of weight $n$. Consider the distinguished affine open subsets of $\pp^k$ given by $U_j = \{x_j \neq 0\}$, for $j= 0,\ldots,k$. Note that $f^{-1}(U_j) = \{x_j \neq 0\}$ with coordinates \[(x_0/x_j,\ldots,x_{j-1}/x_j,x_{j+1}/x_j,\ldots,x_{k+1}/x_j^n,\ldots,x_d/x_j^n).\] Now on the intersection of $U_j$ with $U_i$ we get that the transition maps are diagonal with the coordinate to power $n$ on the diagonal and this corresponds precisely to $\cO_{\pp^k}(n)^{\oplus(d-k)}$.
\end{proof}

\begin{thm}
If $X$ is a real projective $k$-dimensional weakly hyperbolic variety, that admits a flat real fibered morphism $f \colon X \to \pp^k$, then we can embed $X$ into a weighted projective space $Y = \pp(\underbrace{1,\ldots,1}_{k+1},\underbrace{n,\ldots,n}_{m})$ for some $m \in \N$, such that the following diagram commutes:
\begin{equation} \label{eq:embed_into_weighted}
\xymatrix{ X \ar[r]^{\iota}\ar[dr]_f & Y \ar[d]^{\pi} \\ & \pp^k }.
\end{equation}
Here $\pi$ is the projection on the first $k+1$ coordinates.
\end{thm}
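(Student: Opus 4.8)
The plan is to present $X$ as a closed subscheme of the total space of a direct sum of copies of $\cO_{\pp^k}(n)$ over $\pp^k$, sitting over the identity of $\pp^k$, and then to invoke the preceding Lemma to recognize this total space as $Y\smallsetminus V$ inside the weighted projective space $Y$. Since $f$ is finite, hence affine, $\cA:=f_*\cO_X$ is a coherent sheaf of $\cO_{\pp^k}$-algebras, finite as a module, and $X=\underline{\Spec}_{\pp^k}(\cA)$. I would first use ampleness of $\cO_{\pp^k}(1)$ and Serre's theorem to fix $n\ge 1$ with $\cA(n):=\cA\otimes\cO_{\pp^k}(n)$ globally generated; since $\cA(n)$ is coherent on the projective scheme $\pp^k$, the space $H^0(\pp^k,\cA(n))$ is finite-dimensional over $\R$, say of dimension $N$, and a basis $s_1,\dots,s_N$ yields a surjection $\cO_{\pp^k}^{\oplus N}\twoheadrightarrow\cA(n)$. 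By the projection formula $H^0(\pp^k,\cA(n))\cong H^0(X,\cL^n)$, where $\cL=f^*\cO_{\pp^k}(1)$, so I regard each $s_i$ as a section of $\cL^n$ on $X$.

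Next, set $d=k+N$ and $\cE=\cO_{\pp^k}(n)^{\oplus N}$, and write $T=\underline{\Spec}_{\pp^k}\big(\operatorname{Sym}_{\cO_{\pp^k}}\cE^\vee\big)$ for the total space of $\cE$, with bundle projection $\rho\colon T\to\pp^k$. Both $X$ and $T$ are affine over $\pp^k$, so a $\pp^k$-morphism $X\to T$ is the same as an $\cO_{\pp^k}$-algebra map $\operatorname{Sym}_{\cO_{\pp^k}}\cE^\vee\to\cA$, equivalently an $\cO_{\pp^k}$-module map $\cE^\vee=\cO_{\pp^k}(-n)^{\oplus N}\to\cA$, equivalently an $N$-tuple in $H^0(\pp^k,\cA(n))=H^0(X,\cL^n)$; and such a morphism is a closed immersion precisely when the associated algebra map is surjective. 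Let $\iota_0\colon X\to T$ be the $\pp^k$-morphism attached to $(s_1,\dots,s_N)$. Its degree-one component $\cE^\vee\to\cA$ is the twist by $\cO_{\pp^k}(-n)$ of the surjection $\cO_{\pp^k}^{\oplus N}\twoheadrightarrow\cA(n)$, hence surjective, so the whole algebra map is surjective and $\iota_0$ is a closed immersion.

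Finally, the preceding Lemma identifies $T$ with $Y\smallsetminus V$, where $Y=\pp(\underbrace{1,\dots,1}_{k+1},\underbrace{n,\dots,n}_{d-k})$ and $V=\{y_0=\cdots=y_k=0\}$, in such a way that $\rho$ becomes the projection $\pi$ onto the first $k+1$ coordinates. Composing $\iota_0$ with the open immersion $Y\smallsetminus V\hookrightarrow Y$ gives a locally closed immersion $\iota\colon X\to Y$ with $\pi\circ\iota=\rho\circ\iota_0=f$, because $\iota_0$ is a morphism over $\pp^k$. Since $X$ is projective, hence proper, over $\R$ and $Y$ is separated over $\R$, the morphism $\iota$ is proper, so its image is closed in $Y$ and $\iota$ is actually a closed immersion; thus \eqref{eq:embed_into_weighted} commutes by construction. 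I expect the only real work to be the bookkeeping in the middle paragraph — the translations between $\pp^k$-morphisms into a relative affine scheme and algebra homomorphisms, the closed-immersion criterion, and the twist conventions relating sections of $\cL^n$ on $X$ with maps $\cO_{\pp^k}(-n)\to\cA$ on $\pp^k$ — all of which is standard.
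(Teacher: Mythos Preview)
Your proposal is correct and follows essentially the same route as the paper: choose $n$ so that a twist of $f_*\cO_X$ is globally generated, extend the resulting module surjection to a surjection of $\cO_{\pp^k}$-algebras from $\operatorname{Sym}(\cO_{\pp^k}(-n)^{\oplus N})$, take relative $\Spec$, and invoke the preceding Lemma. The only cosmetic differences are that the paper first splits off the unit via the trace map $f_*\cO_X\cong\cO_{\pp^k}\oplus\cE$ and twists only $\cE$ (which you do not need, since your degree-one map already surjects onto all of $\cA$), and that you add the properness argument to upgrade the closed immersion into $Y\smallsetminus V$ to one into $Y$, a point the paper leaves implicit.
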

\begin{proof}
Since the morphism $f$ is finite and flat we know that $f_* \cO_X$ is a vector bundle on $\pp^k$. Furthermore, we have the trace morphism $f_* \cO_X \to \cO_{\pp^k}$ and we obtain a decomposition $f_*\cO_X \cong \cO_{\pp^k} \oplus \cE$, where $\cE$ is some vector bundle. Let $n$ be a positive integer, such that $\cE(n)$ is generated by global sections, i.e., there exits an epimorphism $\cO_{\pp^k}^{\oplus m} \to \cE(n)$ and therefore an epimorphism:
\[
\cO_{\pp^k}(-n)^{\oplus m} \oplus \cO_{\pp^k} \to f_* \cO_x.
\]
This epimorphism extends to an epimorphism $\iota ^{\#} \colon \operatorname{Sym}(\cO_{\pp^k}(-n)^{\oplus m}) \to f_* \cO_X$. Applying the relative spec construction (see \cite[Sec.\ I.3.3]{EH00}) to the sheaf of algebras  $\operatorname{Sym}(\cO_{\pp^k}(-n)^{\oplus m})$ we get the total space of the vector bundle $\cO_{\pp^k}(n)^{\oplus m}$, that we shall denote by $Z$. The epimorphism $\iota^{\#}$ induces the closed embedding $\iota \colon X \to Z$ and by construction, we have the commutative diagram \eqref{eq:embed_into_weighted}. Now it remains to apply the previous lemma to obtain that $Z$ is the complement of a linear subspace in the weighted projective space $Y$ and $\pi$ is the associated projection.
\end{proof}

\section{Some Commutative Algebra}\label{sec:comm}
In this section, we will recall definitions and theorems from commutative algebra that we will need later on. We always let $S=\mathbb{K}[z_0,\ldots,z_d]$ be the standard graded polynomial ring, where $\mathbb{K} = \C$ or $\mathbb{K} = \R$. The \textit{multiplicity} $e(M)$ of a positive-dimensional graded $S$-module $M$ is the normalized leading coefficient of its Hilbert polynomial \cite[Def. 4.1.5]{BrHr93}. The \textit{degree} $\deg\cF$ of a coherent sheaf $\cF$ on $\pp^d=\pp^d_{\mathbb{K}}$ is the multiplicity of the $S$-module $H^0_*(\cF)$.

\begin{dfn}
 An \textit{Ulrich module} is a graded Cohen--Macaulay $S$-module $M$ which is finitely generated in degree zero with the property that the minimal number of generators of $M$ is equal to its multiplicity.
\end{dfn}

Ulrich modules can be characterized in terms of their free resolutions.

\begin{thm}[Brennan, Herzog, Ulrich]\label{thm:ulrichlinear}
 Let $M$ be a finitely generated graded $S$-module. Then the following are equivalent:
 \begin{enumerate}[(i)]
  \item $M$ is an Ulrich module.
  \item The minimal $S$-free resolution 
  \[\textbf{F}: \, 0 \to F_{n-k} \to \cdots \to F_1 \to F_0 \to M \to 0\]
  of $M$ is linear, i.e., $0 \neq F_i$ is generated in degree $i$ for every $0\leq i \leq n-k$, and $k+1=\dim M$.
 \end{enumerate}
 
 If $(i)$ and $(ii)$ hold, then the rank of $F_i$ is $\binom{n-k}{i} \cdot e(M)$.
 
\end{thm}
\begin{proof}
 The equivalence is \cite[Prop. 1.5]{BHU87}. Furthermore, it is shown in \cite{ESW03} right after the proof of Prop. 2.1 that $\textrm{rank } F_i=\binom{n-k}{i} \cdot \textrm{rank } F_0$. But since $M$ is an Ulrich module, we have $e(M)=\textrm{rank } F_0$ so the additional statement follows as well.
\end{proof}

Recall, for example from \cite[\S 1.6]{BrHr93}, that given elements $f_1, \ldots, f_r\in S$ the \textit{Koszul complex} $\textbf{K}(f)=\textbf{K}(f_1,\ldots,f_r)$ is given by
\[0 \to S \to S^r \to \wedge^2 S^r \to \cdots  \to \wedge^{i} S^r \to \wedge^{i+1} S^r \to \cdots \to \wedge^{r} S^r \to 0\]
where an element $a \in \wedge^{i} S^r$ is sent to $f \wedge a \in \wedge^{i+1} S^r$ where $f=(f_1,\ldots,f_r)$.
The Koszul complex is self-dual, i.e., $\textbf{K}(f)$ and  $\Hom_S(\textbf{K}(f),S)$ are isomorphic complexes \cite[Prop. 1.6.10]{BrHr93}. The following related concept will be crucial.
\begin{dfn}\label{def:koszulmatrices}
 Let $A=(A_1, \ldots, A_s)$ be a tuple of pairwise commuting $r \times r$ matrices whose entries are in $S$. We can define on $S^r$ the structure of an
 $S[t_1,\ldots,t_s]$-module (where the $t_i$ are new variables) by letting $t_i$ act on $S^r$ via multiplication with the matrix $A_i$ from the left.
 We denote this $S[t_1,\ldots,t_s]$-module by $P$. Letting $t=(t_1,\ldots,t_s)$ we can consider the Koszul complex $\textbf{K}(t)$.
 We can consider the complex $P \otimes \textbf{K}(t)$ as a complex of $S$-modules instead of $S[t_1,\ldots,t_s]$-modules. 
 This complex is called \textit{the Koszul complex associated to the matrices} $A_1, \ldots, A_s$
 and is denoted by $\textbf{K}(A)$.\label{not:koszulmatrices}
 It is a complex of free $S$-modules and the maps of $\textbf{K}(A)$ are obtained from the maps of $\textbf{K}(t)$ by replacing everywhere $t_i$ by $A_i$.
\end{dfn}

\begin{remark}\label{rem:koszulexact}
 In the situation of the above definition, if $t_1,\ldots,t_s$ is a $P$-regular sequence, then the complex $\textbf{K}(A)$ is a free resolution of the cokernel of the matrix $(A_1 \cdots A_s)$ obtained from concatenating the matrices $A_i$. This follows from \cite[Cor. 1.6.14]{BrHr93}.
\end{remark}

We end this section with recalling a construction that we found in \cite[p. 542]{ESW03}. Consider a complex of free modules over $S$: \[\textbf{F}: 0 \to F_c \to \cdots \to F_1 \to F_0 \to 0,\] and assume that it is linear in the sense that $F_i$ is generated in degree $i$ for all $0 \leq i \leq c$. We fix a basis of each $F_i$ and 
consider the representing matrices $A_i$ (with respect to these fixed bases) of the maps $\psi_i: F_i\to F_{i-1}$ for $1\leq i \leq c$. By assumption, the entries of the matrices $A_i$ are of degree one, i.e., linear forms on $\mathbb{K}^{d+1}$. We consider these entries as degree one elements of the tensor algebra $\textnormal{T}(\mathbb{K}^{d+1})^{\vee}$. Thus the matrices $A_i$ are matrices
over the algebra $\textnormal{T}(\mathbb{K}^{d+1})^{\vee}$ and as such we can look at the product 
$\gamma(\textbf{F})=A_1 \cdots A_c$ which is a matrix whose entries
are elements of $\textnormal{T}(\mathbb{K}^{d+1})^{\vee}$ of degree $c$, i.e., multilinear forms on $\mathbb{K}^{d+1}$. Because $\textbf{F}$ is a complex,
i.e., $\psi_i \circ \psi_{i+1}=0$, these multilinear forms are in fact alternating multilinear forms. 
Thus the entries of the matrix $\gamma(\textbf{F})$ are elements of $\bigwedge^{c} (\mathbb{K}^{d+1})^{\vee}$. Up to multiplication of $\gamma(\textbf{F})$
from the left by a matrix from $\GL_{\rank{F_0}}(\mathbb{K})$ and from the right by a matrix from $\GL_{\rank{F_c}}(\mathbb{K})$. This does not depend on the
choice of the bases of the $F_i$. We call the matrix $\gamma(\textbf{F})$
 constructed above the \textit{alternating matrix} associated to $\textbf{F}$.

\begin{remark}
 In the following, we will not specify the bases of $F_0$ and $F_c$ if it is either clear which bases we choose or if the properties of $\gamma(\textbf{F})$ that we are interested in are independent of such a choice.
\end{remark}

\begin{example}\label{exp:alternatingmatrix}
 Let $l=(l_1, \ldots, l_r)$ where the $l_i \in S$ are homogeneous elements of degree one.
 Then the Koszul complex $\textbf{K}(l)$ is a linear complex. The alternating matrix $\gamma(\textbf{K}(l))$ has the size of $1 \times 1$.
 Its entry is the alternating form on $r$ copies of $\mathbb{K}^{d+1}$ that sends $(v_1, \ldots, v_r)$ to the determinant of the $r \times r$ matrix
 $(l_i(v_j))_{1\leq i,j \leq r}$.
 
 More generally, let $A_1, \ldots, A_r$ be matrices with linear entries from $S$ that commute pairwise
 and let $\textbf{K}(A)$ be the Koszul complex associated to the $A_i$  (see Definition \ref{def:koszulmatrices}).
 This is a linear complex.
 The alternating matrix $\gamma(\textbf{K}(A))$ is the alternating form on $r$ copies of $\mathbb{K}^{d+1}$ that sends $(v_1, \ldots, v_r)$ to
 the matrix
 \[
  \gamma(\textbf{K}(A))(v_1,\ldots,v_r)=\sum_{\sigma \in \mathfrak{S}_r} \textnormal{sgn}( \sigma) \cdot A_{\sigma(1)}(v_1)  \cdots A_{\sigma(r)}(v_r),
 \]
 where $A_i(v_j)$ is supposed to be the matrix whose entries are the entries of $A_i$ evaluated at $v_j$ and $\mathfrak{S}_r$ is the symmetric group on $r$ elements.
 Later on, we will be interested in the case where the matrices $A_i$ are symmetric. Then $\gamma(\textbf{K}(A))$ is also symmetric.
 Indeed, $\gamma(\textbf{K}(A))^{t}$ sends  $(v_1, \ldots, v_r)$ to
 the matrix
 \[
  \gamma(\textbf{K}(A))^{t}(v_1,\ldots,v_r)=\sum_{\sigma \in \mathfrak{S}_r} \textnormal{sgn}( \sigma) \cdot A_{\sigma(r)}(v_r)^{t} 
  \cdots A_{\sigma(1)}(v_1)^{t}.
 \]Letting $\tau$ be the permutation that maps $\tau(j)=r-j+1$ for all $j=1,\ldots,r$ and because the $A_i$ are symmetric, this matrix equals
  \[
  \sum_{\sigma \in \mathfrak{S}_r} \textnormal{sgn}( \sigma)\cdot \textnormal{sgn}( \tau) \cdot A_{\sigma(1)}(v_r)  \cdots A_{\sigma(r)}(v_1).
 \]
 Thus we have \[
               \gamma(\textbf{K}(A))^{t}(v_1,\ldots,v_r)=\textnormal{sgn}( \tau) \gamma(\textbf{K}(A))(v_r,\ldots,v_1)
               = \gamma(\textbf{K}(A))(v_1,\ldots,v_r).
              \]
 The last equality holds because the entries of $\gamma(\textbf{K}(A))$ are alternating forms.
\end{example}

\section{Admissible Determinantal Representations and Ulrich Sheaves} \label{sec:ulrich}

In this section, we will work over the complex numbers $\C$ unless explicitly otherwise stated. We let $S=\mathbb{C}[z_0,\ldots,z_d]$.

\begin{dfn}
  A coherent sheaf $\cF$ on $\pp^d$ is an \textit{Ulrich sheaf} if the $S$-module $H^0_*(\cF)$ is an Ulrich module.
\end{dfn}

Now let $X$ be a projective variety of pure dimension $k$ and fix an embedding $i \colon X \to \pp^d$ given by a line bundle $\cL = i^*\cO_{\pp^d}(1)$, we say that a sheaf $\cF$ on $X$ is Ulrich with respect to $\cL$ if $i_* \cF$ is Ulrich. In this case, we also have that if we decompose into irreducible components $X = X_1 \cup \cdots \cup X_r$, then by \cite[VI \S2, Prop. 2.7]{kollar} we have that $\deg \cF = \sum_{j=1}^r \rank (\cF|_{X_j}) \deg(X_j)$, where the degree of each component is with respect to the embedding $i$, see also \cite{ESW03}. When the embedding is fixed and there exists such a sheaf on $X$ of degree $n$, then we will simply say that $X$ admits an Ulrich sheaf of degree $n$.

There is yet another equivalent way to define Ulrich sheaves. Given a subscheme $X \subset  \pp^d$ of pure dimension $k$, we can realize $X$ as a branched covering of $\pp^k$ by means of a linear projection from a linear subspace of $\pp^d$ of dimension $d -k -1$ disjoint from $X$. One can define then a sheaf $\cF$ supported on $X$ (scheme-theoretically) to be Ulrich if for a general linear projection $\pi \colon X \to \pp^k$, there exists a positive integer $m$, such that $\pi_* \cF \cong \cO_{\pp^k}^m$. See \cite[Prop.\ 2.1]{ESW03} for the equivalence of those definitions.

We now recall the definition of determinantal representations of subvarieties of $\pp^d=\pp^d_\C$ introduced in \cite{SV14}.

\begin{dfn} \label{dfn:Livsic-type_det_rep}
We say that $X \subset  \pp^d$ of dimension $k$ has a Livsic-type determinantal representation, if there exists a tensor $\gamma \in \wedge^{k+1} \C^{d+1} \otimes \textnormal{M}_n(\C)$, such that the set of closed points $p \in \pp^d$, satisfying $\gamma \wedge p$ has non-trivial kernel considered as a linear map from $\C^n$ to $\wedge^{k+2}\C^{d+1} \otimes \C^n$, is precisely the set of closed points of $X$.
\end{dfn}

Consider the kernel sheaf $\cK$ of the vector bundle map $\cO_{\pp^d}(-1)^n \to \cO_{\pp^d}^{\binom{d+1}{  k+2} n }$ associated to $\gamma$. Let us associate a cohomology cycle to $\gamma$. We decompose $X = X_1 \cup \cdots \cup X_r$ into irreducible components, and for each component, we set $n_j$ to be the dimension of the fiber of $\cK$ at the generic point of $X_j$. We then define the cycle of $\gamma$ to be:
\[
Z(\gamma) = \sum_{j=1}^r n_j [X_j].
\]
Let us denote (here $[H]$ is the class of a hyperplane):
\[
\deg(\gamma) = \deg(Z(\gamma)) = \int_{\pp^d} Z(\gamma) \cdot [H]^k =  \sum_{\dim X_j = k} n_j \deg(X_j).
\]

\begin{dfn} \label{dfn:admissible}
We say that $X$ has an admissible (very reasonable in the parlance of \cite{SV14}) determinantal representation if $X$ has a determinantal representation $\gamma \in \wedge^{k+1} \C^{d+1} \otimes \textrm{M}_n(\C)$ and $\deg(\gamma) = n$. 
\end{dfn}


\begin{remark}\label{rem:chow}
Let $\gamma \in \wedge^{k+1} \C^{d+1} \otimes \textrm{M}_n(\C)$ be an admissible determinantal representation of the projective variety $X\subset \pp^d$ of dimension $k$. Then the Chow form of $X$ has a determinantal representation. Indeed, we have
 \[
  \wedge^{k+1} \C^{d+1} \otimes \textrm{M}_n(\C) \cong (\wedge^{d-k} \C^{d+1})^\vee \otimes \textrm{M}_n(\C)
 \]
 and we can think of $\gamma$ as a matrix having linear forms on $\wedge^{d-k} \C^{d+1}$ as entries. Let $Y\subset \pp(\wedge^{d-k} \C^{d+1})$ denote the corresponding determinantal hypersurface defined by $\det \gamma$. Consider the Pl\"ucker embedding of the Grassmannian $\G(d-k-1,d)\subset \pp(\wedge^{d-k} \C^{d+1})$. The intersection $Y\cap\G(d-k-1,d)$ consists of exactly those linear subspaces that intersect $X$ \cite[Thm.\ 2.18]{SV14}. In particular, the admissible determinantal representation $\gamma$ gives a linear determinantal representation of some power of the \textit{Chow form} of $X$.
\end{remark}

\begin{ex}
It is, however, not true that a determinantal representation of the Chow form of $X$ yields a determinantal representation of $X$ itself. To see this let $\delta_i$ be the standard basis for $\C^3$ and set $X = \left\{[\delta_0], [\delta_1], [\delta_2] \right\} \subset \pp^2$, where the square brackets stand for the projective equivalence class of the point. Let us write $x_{01}, x_{02}, x_{12}$ for the coordinates of the dual projective space, where for example $x_{01}$ is the coordinate vanishing on the hyperplane orthogonal to $\delta_2$. Note that the Chow form of $X$ is the union of the coordinate hyperplanes and the following matrix is a determinantal representation for the Chow form
\[
\begin{pmatrix}
x_{01} & 0 & 0 \\ 0 & x_{02} & x_{01} \\ 0 & 0 & x_{12}
\end{pmatrix}.
\]
The conressponding tensor is:
\[
\gamma = \begin{pmatrix} \delta_2 & 0 & 0 \\ 0 & -\delta_1 & \delta_2  \\ 0 & 0 & \delta_0 \end{pmatrix} \in \C^3 \otimes M_3(\C).
\]
Now note that 
\[
\gamma \wedge \delta_0 = \begin{pmatrix} - \delta_0 \wedge \delta_2 & 0 & 0 \\ 0 & \delta_0 \wedge \delta_1 & -\delta_0 \wedge \delta_2 \\ 0 & 0 & 0 \end{pmatrix}.
\]
This is an injective map from $\C^3$ to $\wedge^2 \C^3 \otimes \C^3$ and thus $\gamma$ is not a determinantal representation of $X$, since $[\delta_0] \in X$.
\end{ex}

\begin{lem} \label{lem:reduction_to_commuting}
Let $X \subset \pp^d$ be a subvariety of dimension $k$ and let $\gamma \in \wedge^{k+1} \C^{d + 1} \otimes M_n(\C)$ be an admissible determinantal representations for $X$. Then there exist commuting matrices of linear forms $T_0, \ldots, T_{d-k-1}$, such that $X$ is precisely the collection of points, where the long matrix $T= \left(T_0,\ldots, T_{d-k-1}\right)$ has a left kernel. Furthermore, for every $0 \leq j \leq d-k-1$, $T_j = z_j I - A_j$, where $A_j$ is a matrix of linear forms in the variables $z_{d-k}, \ldots, z_d$ and the $A_j$ are generically semi-simple.
\end{lem}

\begin{proof}
We fix a subspace $V$ off the Chow form of $X$ and a basis $e_0,\ldots,e_{d-k-1}$ for $V$. We complete our basis to a basis of $\C^{d+1}$ that we will denote by $e_0,\ldots,e_d$. As we have seen in \ref{rem:chow} for any $d-k-1$-dimensional subspace $V \subset \pp^d$ that is off the Chow form of $V$, the matrix $\gamma \wedge (e_0 \wedge \cdots e_{d-k-1})$ is invertible.  For every point $u = \sum_{j=0}^d z_j e_j \notin V$ and every $0 \leq i \leq d-k-1$, we define the matrices
\begin{multline*}
\gamma(V,i,u) = \gamma \wedge (e_0 \wedge \cdots \wedge e_{i-1} \wedge u \wedge e_{i+1} \wedge \cdots \wedge e_{d-k-1}) = \\ z_i \gamma(V) + (-1)^{d-k-i -1} \sum_{j=d-k}^d z_j \gamma \wedge (e_0 \wedge \cdots \wedge e_{i-1} \wedge e_{i+1} \wedge \cdots \wedge e_{d-k-1} \wedge e_j).
\end{multline*}
We set $T_i^t = \gamma(V)^{-1} \gamma(V,u,i)$, these are matrices of linear forms in the variables $z_0,\ldots,z_d$. By \cite[Cor. 2.21]{SV14} these matrices pairwise commute. By \cite[Cor. 6.3.18]{kummerdiss} the variety $X$ is precisely the collection of points where the $T_i^t$ have joint kernel. 

Note that $T_i = z_i I - A_i$, for $0 \leq i \leq d-k-1$, where $A_i = (-1)^{d-k-i} \sum_{j=d-k}^d z_j (\gamma \wedge (e_0 \wedge \cdots \wedge e_{i-1} \wedge e_{i+1} \wedge \cdots \wedge e_{d-k-1} \wedge e_j))^t$. Thus $A_i$ are matrices of linear forms in the variables $z_{d-k},\ldots,z_d$. Furthermore, since $\gamma$ is admissible, we have that for a generic point $v' = \sum_{j=d-k}^d z_j e_j$, the joint kernels of the $T_i$ at the points $X \cap \operatorname{Span}\{V,v'\}$ span $\C^{d+1}$. These are precisely the joint eigenspaces of the $A_i$ and thus the $A_i$ are generically semi-simple.
\end{proof}

The next theorem shows that $X$ has an admissible determinantal representation if and only if $X$ admits an Ulrich sheaf. Note that Eisenbud and Schreyer proved in \cite[Thm.\ 0.3]{ESW03} that if a variety admits an Ulrich sheaf, then the Chow form of the variety has a determinantal representation. The next theorem completes the picture described there by characterizing those determinantal representations of the Chow form that arise from Ulrich sheaves. One direction in the following proof is a refinement of the proof of \cite[Thm.\ 0.3]{ESW03}.
%
%

\begin{thm} \label{thm:ulrich_iff_admissible}
The following conditions for a subvariety $X \subset  \pp^d$ of dimension $k$, are equivalent:
\begin{enumerate}[(i)]
 \item $X$ admits an Ulrich sheaf of degree $n$.
 \item  $X$ has an admissible determinantal representation $\gamma$ such that $\deg(\gamma) = n$.
\end{enumerate}
\end{thm}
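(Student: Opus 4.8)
The plan is to extract the two implications from the structure theory of Ulrich sheaves recalled just before the statement. For $(i)\Rightarrow(ii)$, I would start with an Ulrich sheaf $\cF$ of degree $n$ supported on $X$ and push it forward to $\pp^d$, where the recalled linear resolution
\[
\xymatrix{ 0 \ar[r] & \cO_{\pp^d}(-d+k)^n \ar[r] & \cdots \ar[r] & \cO_{\pp^d}(-1)^{n(d-k)} \ar[r]^-{\varphi} & \cO_{\pp^d}^n \ar[r] & \cF \ar[r] & 0}
\]
is available. The key point is to convert the first syzygy map $\varphi \colon \cO_{\pp^d}(-1)^{n(d-k)} \to \cO_{\pp^d}^n$, together with the higher linear maps, into a single tensor $\gamma \in \wedge^{k+1}\C^{d+1}\otimes \textrm{M}_n(\C)$. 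Concretely, $\varphi$ is given by an $n\times n(d-k)$ matrix of linear forms; I would reorganize its entries (using that the whole resolution is linear and Koszul-like in the variables $z_0,\ldots,z_d$) to obtain the required element of $\wedge^{k+1}\C^{d+1}\otimes \textrm{M}_n(\C)$ whose associated map $\gamma\wedge p$ has nontrivial kernel exactly on $X$. That the degeneracy locus is $X$ follows because the cokernel of $\varphi$ is $\cF$, whose support is $X$, and the rank count $\rank F_{d-k}=\rank F_0=n$ forces the generic fiber dimension of the kernel sheaf $\cK$ along each top-dimensional component $X_j$ to equal $\rank(\cF|_{X_j})$; summing $n_j\deg X_j$ then gives $\deg(\gamma)=\deg\cF=n$, so the representation is admissible.

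For $(ii)\Rightarrow(i)$, I would take an admissible representation $\gamma$ with $\deg(\gamma)=n$ and consider the kernel sheaf $\cK$ of $\cO_{\pp^d}(-1)^n \to \cO_{\pp^d}^{\binom{d+1}{k+2}n}$. The task is to show $\cK$ (suitably twisted) is an Ulrich sheaf of degree $n$ on $X$. Here I would use the characterization via general linear projection: choosing a general $\pi\colon X\to\pp^k$, one checks that $\pi_*\cK \cong \cO_{\pp^k}^m$. The admissibility condition $\deg(\gamma)=n$ is exactly what rules out "extra" lower-dimensional or multiplicity contributions, pinning down the cohomology of $\cK$ so that $\bigoplus_j \textrm{H}^0(\pp^d,\cK(j))$ is a maximally generated maximal Cohen–Macaulay $S$-module with a linear resolution; equivalently, one verifies $\textrm{H}^i(\pp^d, \cK(-i))=0$ for appropriate $i$, which is the standard cohomological criterion for the Ulrich property. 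The Hilbert polynomial computation $\chi(\cK(e)) = \textrm{h}^0(\cK)\binom{e+k}{k}$ then pins the degree at $\textrm{h}^0(\cK)=n$.

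The main obstacle I anticipate is the bookkeeping in $(i)\Rightarrow(ii)$: showing that the linear first syzygy map of an Ulrich module can genuinely be encoded as an element of $\wedge^{k+1}\C^{d+1}\otimes\textrm{M}_n(\C)$ in the precise sense of Definition 4.9, i.e. that the higher terms of the linear resolution assemble into the wedge-structure $\gamma\wedge p$ rather than merely giving an abstract $n\times N$ matrix of linear forms. This presumably relies on the self-duality of Ulrich sheaves under $\shExt^{d-k}(-,\cO_{\pp^d}(k-d))$ recalled above, which identifies the resolution with (a twist of) its own dual and thereby produces the antisymmetry needed to land in $\wedge^{k+1}$. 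The reverse direction is comparatively routine once one has the dictionary between $\gamma$, its kernel sheaf, and the pushforward-to-$\pp^k$ criterion, with the admissibility hypothesis doing the essential work of forcing the rank/degree match.
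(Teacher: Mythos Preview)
Your proposal has the right overall shape but misidentifies the key mechanisms in both directions.

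For $(i)\Rightarrow(ii)$, the antisymmetry that lands $\gamma$ in $\wedge^{k+1}\C^{d+1}\otimes\textrm{M}_n(\C)$ has nothing to do with the self-duality of Ulrich sheaves. The paper's construction is simply this: take \emph{all} the differentials $\psi_1,\ldots,\psi_{d-k}$ of the linear resolution and compose them as if their linear-form entries lived in the tensor algebra of $(\C^{d+1})^*$. The result is an $n\times n$ matrix with entries in $\otimes^{d-k}(\C^{d+1})^*$, and the complex condition $\psi_j\psi_{j+1}=0$ (in the ordinary sense) forces this composite to vanish modulo the ideal of symmetric tensors, hence it lies in $\wedge^{d-k}(\C^{d+1})^*\otimes\textrm{M}_n(\C)\cong\wedge^{k+1}\C^{d+1}\otimes\textrm{M}_n(\C)$. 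So the wedge structure is produced by the resolution being a complex, not by duality; your speculation about self-duality would not yield this. Your idea of ``reorganizing the first syzygy map'' alone cannot work, since $\varphi=\psi_1$ by itself carries no antisymmetry information.

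For $(ii)\Rightarrow(i)$, the paper takes a quite different and more constructive route than your cohomological one. The essential input you are missing is an external structural result (\cite[Cor.~2.20]{SV14}): admissibility of $\gamma$ is exactly what allows one to reduce $\gamma$ to $d-k$ \emph{commuting} $n\times n$ matrices of linear forms $T_1,\ldots,T_{d-k}$, which after a change of coordinates take the form $T_j=z_j-T_{0,j}$. These then form a regular sequence acting on $S^n$, and the associated Koszul complex is the desired linear resolution of an Ulrich module. Your plan to verify directly that the kernel sheaf $\cK$ is Ulrich via cohomological vanishing or the projection criterion is not wrong in spirit, but you never say how admissibility would produce that vanishing; in the paper, admissibility is spent precisely on obtaining the commuting-matrix normal form, after which the Ulrich property is automatic from the Koszul construction. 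Without that reduction your argument has a genuine gap.
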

\begin{proof}
Assume that there exists an Ulrich sheaf $\cF$ supported on $X$. Denote the module of twisted global sections of $\cF$ by $M = \oplus_{j \in \N} \textrm{H}^0(\pp^d,\cF(j))$. Consider the linear free resolution of $M$:
\begin{equation} \label{eq:resolution}
\textbf{F}:\xymatrix{ 0 \ar[r] & F_{d - k} \ar[r]^{\psi_{d-k}} & \cdots \ar[r]^{\psi_2} & F_1 \ar[r]^{\psi_1} & F_0 \ar[r] & M \ar[r] & 0}.
\end{equation}
We want to show that the transpose $\gamma(\textbf{F})^{t}$ of the associated alternating matrix is an admissible determinantal representation of $X$.
For $v \in \C^{d+1}$ the matrix $\gamma(\textbf{F}) \wedge v$ is (up to a sign) the same as the matrix that we obtain by plugging in the $v_i$ for the $z_i$ in one of the $\psi_j$ (for example $\psi_1$) before composing them. This is because if we regard $\gamma(\textbf{F})$ and $\gamma(\textbf{F}) \wedge v$ as matrix-valued alternating multilinear forms as in Section \ref{sec:comm}, we have $(\gamma(\textbf{F}) \wedge v)(v_2,\ldots,v_{d-k})=\gamma(\textbf{F})(v,v_2,\ldots,v_{d-k})$ for all $v_i\in\C^{d+1}$.
Thus since $X$ is the support of $\cF$ we see that $\gamma(\textbf{F}) \wedge v$ has a nontrivial left kernel if and only if $[v] \in X$.
The left kernel of $\gamma(\textbf{F}) \wedge v$ for a general point from an irreducible component $X_j$ of $X$ is exactly $\rank (\cF|_{X_j})$.
Thus it follows from \[n=\deg \cF = \sum_{j=1}^r \rank (\cF|_{X_j}) \deg(X_j)\] that $\gamma(\textbf{F})^{t}$ is an admissible determinantal representation of Livsic-type.
Let $T = \left(T_0, \ldots, T_{d-k-1} \right)$ be the long matrix obtained in Lemma \ref{lem:reduction_to_commuting}. If we plug in a point $p\in\pp^d$, then $T$ has nontrivial left kernel if and only if $p \in X$. Thus the reduced support of the sheaf $\cF=\widetilde{M}$ where $M= \coker (T)$ is precisely $X$. 
Recall that $T_j = z_j I - A_j$, where $A_j$ is a matrix of linear forms in the variables $z_{d-k},\ldots, z_d$. We let $R=\C[z_{d-k},\ldots,z_d]$ and let $z_{i}$ act on $R^n$ via $A_{i}$ for  $i=0,\ldots,{d-k-1}$. This gives a graded $S$-module which is isomorphic to $M$. It is generated in degree zero with the minimal number of generators being equal to $n$. On the other hand, the Hilbert function of this $S$-module is the same as the Hilbert function of the $R$-module $R^n$. Therefore $e(M)=n$ and $M$ is an Ulrich module. Because the matrices $A_i$ are simultaneously diagonalizable at a general point of $\C^{k+1}$, we see that the annihilator of $M$ is a radical ideal. Therefore, $\cF$ is an Ulrich sheaf of degree $n$ with scheme theoretic support $X$.
%
\end{proof}

\begin{remark}\label{rem:koszulres}
 The free resolution of the Ulrich module defined in the second part of the previous proof is the Koszul complex associated to the matrices $T_0,\ldots,T_{d-k-1}$ by Remark \ref{rem:koszulexact}. Since it is linear, this is another way of seeing that it is an Ulrich module.
\end{remark}

From the proof of the preceding theorem, we also get the following statement for Ulrich sheaves on $\R$-varieties.

\begin{cor}
 Let $X \subset  \pp^d_\R$ be a projective $\R$-variety. The following are equivalent:
\begin{itemize}
\item[(i)] There exists an Ulrich sheaf $\cF$ on $\pp^d_\R$ supported on $X$,

\item[(ii)] There is a  tensor $\gamma \in \wedge^{k+1} \R^{d+1} \otimes \textnormal{M}_n(\R)$ which is an admissible determinantal
representation of $X_\C$.
\end{itemize}
\end{cor}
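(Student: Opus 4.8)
The plan is to observe that the proof of Theorem~\ref{thm:ulrich_iff_admissible} is entirely constructive and uses no property of the ground field beyond what is available over $\R$; it therefore suffices to run the same argument over $\R$ and to keep track of the base change along the flat inclusion $\R \hookrightarrow \C$.

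For $(i)\Rightarrow(ii)$: given an Ulrich sheaf $\cF$ on $\pp^d_\R$ supported on $X$, its module of twisted global sections $M = \oplus_{j\in\N}\textrm{H}^0(\pp^d_\R,\cF(j))$ is a maximally generated maximal Cohen--Macaulay module over $S=\R[z_0,\ldots,z_d]$ and thus has a linear minimal free resolution over $S$, all of whose differentials are matrices of real linear forms. Composing these differentials exactly as in the proof of Theorem~\ref{thm:ulrich_iff_admissible} produces a tensor $\gamma\in\wedge^{k+1}\R^{d+1}\otimes\textrm{M}_n(\R)$ such that $\gamma\wedge v$ has nontrivial left kernel precisely for $v\in X$, and such that the fiber dimensions of its kernel sheaf over the generic points of the components of $X$ sum to $n$. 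Since forming the kernel sheaf, its support and its associated cycle degree all commute with $-\otimes_\R\C$, the transpose $\gamma^{\rm T}$, regarded in $\wedge^{k+1}\C^{d+1}\otimes\textrm{M}_n(\C)$, is an admissible determinantal representation of $X_\C$ of degree $n$.

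For $(ii)\Rightarrow(i)$: I would import from \cite{SV14} the reduction of $\gamma$ to $d-k$ commuting $n\times n$ matrices of linear forms \cite[Cor.\ 2.20]{SV14}, together with the change of variables bringing them into the form $T_j = z_j - T_{0,j}$, and check that both steps can be carried out over $\R$. The commuting reduction is a field-independent linear-algebra normalization. The change of variables is realized by an element of $\GL_{d+1}$; the elements effecting it form a Zariski-open, nonempty subset, which is defined over $\R$ and hence contains real points since $\GL_{d+1}$ is rational over the infinite field $\R$, so the normalization may be chosen defined over $\R$. The resulting Koszul complex is then a complex of free graded $\R[z]$-modules whose exactness may be checked after applying $-\otimes_\R\C$, where it becomes the resolution appearing in the proof of Theorem~\ref{thm:ulrich_iff_admissible}. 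Its cokernel $M$ therefore has a linear minimal free resolution over $\R[z]$ of length $d-k$ with the prescribed Betti numbers, so the associated sheaf is an Ulrich sheaf on $\pp^d_\R$; its complexification is supported on $X_\C$, whence the sheaf is supported on $X$.

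The step I expect to be the main obstacle is precisely the descent of the normalizations imported from \cite{SV14}, which are stated over $\C$: one must verify that the dense open set of admissible changes of variables bringing $\gamma$ to the form $T_j = z_j - T_{0,j}$ (and making the corresponding Koszul complex exact) actually meets $\GL_{d+1}(\R)$. Everything else is formal and rests only on the flatness of $\R\to\C$ and on the preservation of Cohen--Macaulayness, minimality of free resolutions, exactness, support and degree under this base change.
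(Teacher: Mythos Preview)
Your proposal is correct and aligns with the paper's approach: the paper states the corollary without a separate proof, preceded only by the remark that it follows ``from the proof of the preceding Theorem,'' i.e.\ that the constructions of Theorem~\ref{thm:ulrich_iff_admissible} go through over $\R$. Your write-up is a faithful and careful elaboration of exactly this, including the point the paper leaves implicit---that the reduction from \cite[Cor.~2.20]{SV14} and the subsequent change of variables can be carried out over $\R$.
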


\begin{example}
 It was shown in \cite{BRW05} that the variety of $m \times n$ matrices of rank at most $r$ admits a rank one Ulrich sheaf for all $1 \leq r \leq \min\{m,n\}$.
 Thus determinantal varieties have determinantal representations.
\end{example}

Let $X \subset  \pp^d$ be a subvariety of pure dimension $k$ and assume that $X$ has an admissible determinantal representation of size $n$. Note that the group $\GL_n(\C)$ acts both from the left and from the right on the set of admissible determinantal representation of size $n$. The actions are induced from the natural actions of $\GL_n(\C)$ on $\wedge^{k+1} \C^{d+1} \otimes M_n(\C)$ by left and right multiplication on the second coordinate of the tensor product.  We will say that two admissible tensors $\gamma_1$ and $\gamma_2$ are similar, if there exist matrices $A,B \in \GL_n(\C)$, such that $\gamma_1 = A \gamma_2 B$. Then we have the following result:

\begin{prop} \label{prop:uniqueness}
The association of isomorphism classes of Ulrich sheaves and similarity classes of determinantal representations described in Theorem \ref{thm:ulrich_iff_admissible} is a bijection.
\end{prop}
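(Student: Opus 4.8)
The plan is to extract from the proof of Theorem~\ref{thm:ulrich_iff_admissible} the two explicit maps it produces --- $\cF\mapsto\gamma^{\mathrm T}$ from Ulrich sheaves to admissible tensors, and $\delta\mapsto\widetilde M$ in the opposite direction (where $M$ is the cokernel of the first map of the Koszul complex attached to $\delta$) --- and to check (a) that each descends to a well-defined map on isomorphism, resp.\ similarity, classes, and (b) that the two descended maps are mutually inverse. Write $S$ for the homogeneous coordinate ring $\C[z_0,\dots,z_d]$. The conceptual skeleton is that both maps factor through a chain of canonical correspondences: an Ulrich sheaf $\cF$ of degree $n$ is the same datum as its module $M=\bigoplus_{j\in\N}\mathrm H^0(\pp^d,\cF(j))$ of twisted global sections, which is a graded maximal Cohen--Macaulay $S$-module with a linear minimal free resolution of the shape recorded in \cite{ESW03}; such a resolution is unique up to isomorphism of complexes; and by \cite[Cor.~2.20]{SV14} a resolution of this Koszul type is encoded, up to simultaneous conjugation, by a tuple $T_1,\dots,T_{d-k}$ of commuting matrices of linear forms forming an $S^n$-sequence, which is yet another packaging of an admissible determinantal representation. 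The proposition is exactly that each link in this chain is a bijection on equivalence classes, so their composite is too.

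For well-definedness of $\cF\mapsto\gamma^{\mathrm T}$: the module $M$ depends only on the isomorphism class of $\cF$, and conversely $\cF=\widetilde M$, so $\cF\leftrightarrow M$ is a genuine bijection on iso classes --- when $\dim X=0$ one additionally observes that a module of Castelnuovo--Mumford regularity $0$ agrees in degrees $\ge 0$ with the non-negative part of its saturation, so nothing is lost. The minimal free resolution \eqref{eq:resolution} of $M$ is determined only up to composing each differential $\psi_j\colon F_j\to F_{j-1}$ with automorphisms $P_j$ of the $F_j$; in the product $\gamma=\psi_1\circ\cdots\circ\psi_{d-k}$ the interior factors telescope away and only the outermost $P_0$ and $P_{d-k}$ survive, so $\gamma$ changes to $A\gamma B$ with $A,B\in\GL_n(\C)$. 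Hence $\gamma$, and therefore $\gamma^{\mathrm T}$, is well defined up to similarity. For $\delta\mapsto\widetilde M$: \cite[Cor.~2.20]{SV14} reduces $\delta$ to commuting linear matrices $T_1,\dots,T_{d-k}$ that are unique up to simultaneous $\GL_n(\C)$-conjugation and depend only on the similarity class of $\delta$, so the cokernel $S^n/(T_1S^n+\cdots+T_{d-k}S^n)$, hence its associated sheaf, is well defined up to isomorphism.

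It then remains to see the two maps are mutually inverse. Starting from $\cF$: the resolution \eqref{eq:resolution} is a minimal free resolution all of whose differentials have entries in the irrelevant maximal ideal, and the structure theory underlying \cite[Cor.~2.20]{SV14} identifies this complex --- after the transpose built into the construction of Theorem~\ref{thm:ulrich_iff_admissible} --- with the Koszul-type complex of the commuting matrices extracted from $\gamma^{\mathrm T}$; thus the cokernel of its first differential is $M$ again and we recover $\cF=\widetilde M$. Starting from $\delta$: the Koszul complex on $T_1,\dots,T_{d-k}$ has all differentials in the irrelevant ideal and is exact onto $M=S^n/(\sum_jT_jS^n)$, so by uniqueness of minimal free resolutions it \emph{is} the minimal free resolution of $M$; running Theorem~\ref{thm:ulrich_iff_admissible} on $\cF=\widetilde M$ composes exactly these differentials and, after the built-in transpose, returns $\delta$ up to the left/right $\GL_n(\C)$-ambiguity, i.e.\ up to similarity.

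The main difficulty I anticipate is bookkeeping rather than a new idea: one must pin down the conventions in the two halves of the proof of Theorem~\ref{thm:ulrich_iff_admissible} --- the transpose, and the two independent one-sided $\GL_n(\C)$-actions that together constitute similarity --- precisely enough to conclude that ``take the minimal free resolution and compose its differentials'' and ``reduce a determinantal representation to commuting matrices and take the Koszul cokernel'' are honestly inverse to one another, rather than merely related up to some uncontrolled equivalence. The only genuinely technical inputs are the uniqueness of minimal free resolutions and, in dimension zero, the comparison of a regularity-zero graded module with the non-negative part of its saturation.
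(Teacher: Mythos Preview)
Your proposal is correct and follows essentially the same strategy as the paper: pass to modules of twisted global sections, use uniqueness of minimal free resolutions to show that the tensor $\gamma$ depends on $\cF$ only up to the similarity $P_0^{-1}\gamma P_{d-k}$, invoke \cite[Cor.~2.20]{SV14} for the converse direction, and observe that the two constructions in Theorem~\ref{thm:ulrich_iff_admissible} are inverse to each other. The paper's own proof is considerably more terse --- it only writes out the lift of an isomorphism $\cF_1\cong\cF_2$ to an isomorphism of resolutions and then simply asserts that the constructions are mutually inverse --- so your version supplies details (the telescoping of the interior base-change matrices, the explicit use of minimality to identify the Koszul complex with the minimal free resolution) that the paper leaves implicit.
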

\begin{proof}
Assume that we have an isomorphism $\varphi \colon \cF_1 \to \cF_2$. Let $M_j$ be the module of twisted global sections of $\cF_j$. Then $\varphi$ induces an isomorphism $M_1 \cong M_2$, that can be lifted to an isomorphism of their minimal free resolutions (see \cite[Thm.\ 1.6]{Eis05})
$$
\xymatrix{ 0 \ar[r] & S(-d - k)^n \ar[r] \ar[d]^{A_{d-k}} & \cdots \ar[r] & S(-1)^{n (d-k)} \ar[r]^{\quad\quad T} \ar[d]_{A_1} & S^n \ar[r] \ar[d]^{A_0} & M_1 \ar[r] \ar[d]^{\varphi} & 0 \\ 0 \ar[r] & S(-d - k)^n \ar[r] & \cdots \ar[r] & S(-1)^{n (d-k)} \ar[r]^{\quad\quad T} & S^n \ar[r] & M_2 \ar[r] & 0}.
$$
Note that each $A_j$ is an invertible complex matrix. Hence we have that $\psi_{1j} = A_{j-1}^{-1} \psi_{2j} A_j$. Now note that the constructions of Theorem \ref{thm:ulrich_iff_admissible} are inverse to each other and that the admissible determinantal representations are determined by the commuting matrices from Lemma \ref{lem:reduction_to_commuting}.
\end{proof}

The following corollary is a strengthening of \cite[Thm.\ 6.2]{SV14}.

\begin{cor} \label{cor:all_det_reps_curves}
Every projective curve $X \subset  \pp^d$ has an admissible determinantal representation of size $\deg X$ and in the case of smooth irreducible curves the algorithm of \cite[Thm.\ 6.2]{SV14} constructs them all.
\end{cor}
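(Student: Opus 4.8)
The plan is to read this off from Theorem~\ref{thm:ulrich_iff_admissible} and Proposition~\ref{prop:uniqueness}, the only extra input being the classical existence of a rank one Ulrich sheaf on a curve. First I would recall that for $k=1$ a coherent sheaf $\cF$ supported on $X \subseteq \pp^d$ is Ulrich precisely when $\textrm{H}^0(X,\cF(-1)) = \textrm{H}^1(X,\cF(-1)) = 0$, and that on a reduced curve $X$ of pure dimension one such a sheaf always exists as a torsion free rank one sheaf: when $X$ is integral of arithmetic genus $g$ a general line bundle of degree $g-1+\deg X$ works by Riemann--Roch, and the general case follows by treating the components (cf.\ \cite{ESW03}). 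By the formula $\deg\cF = \sum_j \rank(\cF|_{X_j})\deg(X_j)$ recalled in Section~\ref{sec:ulrich}, any rank one Ulrich sheaf on $X$ has degree $\deg X$. Feeding such an $\cF$ into the construction in the proof of Theorem~\ref{thm:ulrich_iff_admissible} --- compose the linear differentials of the linear resolution of $M = \bigoplus_{j\in\N}\textrm{H}^0(\pp^d,\cF(j))$ --- yields an admissible determinantal representation $\gamma$ with $\deg(\gamma) = \deg X$, and since $k=1$ the matrix $\gamma$ has size exactly $\deg X$. This gives the first assertion; moreover these sheaves $\cF$ are exactly the data on which the algorithm of \cite[Thm.\ 6.2]{SV14} operates.

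For the second assertion I would argue conversely. Let $\gamma$ be any admissible determinantal representation of $X$ of size $n=\deg X$. By Theorem~\ref{thm:ulrich_iff_admissible} it arises from an Ulrich sheaf $\cF$ of degree $n$; writing $X = X_1 \cup \cdots \cup X_r$ and using admissibility, the identity $\sum_j \rank(\cF|_{X_j})\deg(X_j) = n = \sum_j \deg(X_j)$ together with $\rank(\cF|_{X_j}) \geq 1$ for all $j$ forces $\rank(\cF|_{X_j})=1$ for all $j$, so $\cF$ is one of the rank one Ulrich sheaves above. Proposition~\ref{prop:uniqueness} now identifies isomorphism classes of rank one Ulrich sheaves on $X$ with similarity classes of admissible determinantal representations of $X$ of size $\deg X$. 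Hence running the algorithm of \cite[Thm.\ 6.2]{SV14} over all such sheaves produces, up to the equivalence $\gamma \mapsto A\gamma B$, every admissible determinantal representation of $X$ of size $\deg X$.

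The step I expect to be the main obstacle is the interface with singular and reducible curves: one has to be sure that ``rank one torsion free Ulrich sheaf'' is the right notion there, that such sheaves genuinely exist (so the first assertion is non-vacuous), and that they are precisely the objects enumerated by the algorithm of \cite[Thm.\ 6.2]{SV14}, so that combining that algorithm with Proposition~\ref{prop:uniqueness} literally yields \emph{all} admissible determinantal representations of minimal size rather than just those coming from line bundles on a smooth model. Once this matching is in place, the corollary is an immediate consequence of the two results proved above.
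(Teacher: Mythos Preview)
Your proposal is correct and follows the same route as the paper: invoke the existence of Ulrich sheaves on curves from \cite{ESW03}, then use Theorem~\ref{thm:ulrich_iff_admissible} and Proposition~\ref{prop:uniqueness} to translate back and forth between Ulrich sheaves and admissible representations. The paper's own proof is a single sentence citing \cite{ESW03} and phrasing the classification as ``non-special line bundles of degree $g-1$ on the normalization'' (i.e.\ describing $\cF(-1)$ pulled back to $\tilde X$), whereas you phrase it as rank one torsion free sheaves on $X$ with $\textrm{H}^i(\cF(-1))=0$; these are the same objects, and your version makes the ``constructs them all'' claim via Proposition~\ref{prop:uniqueness} more explicit than the paper does. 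Your stated worry about singular and reducible curves is exactly the point the paper sweeps under the rug by passing to the normalization, so you are not missing anything the paper supplies.
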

\begin{proof}
By \cite[Prop.\ 4.4]{ESW03} every projective curve $X \subset  \pp^d$ admits an Ulrich sheaf of rank $1$. Therefore it has an admissible determinantal representation of size $\deg X$ by Theorem \ref{thm:ulrich_iff_admissible}.

Now let $X \subset  \pp^d$ be smooth and irreducible. By \cite[Prop.\ 4.4]{ESW03} and Proposition \ref{prop:uniqueness} there is a one-to-one correspondence between non-special line bundles of degree $g-1$ on $X$ and admissible determinantal representations of $X$ of size $\deg X$. Recall that a line bundle $\cL$ on $X$ of degree $g-1$ is said to be non-special if $h^0(\cL) = h^1(\cL) = 0$. The algorithm provided in \cite[Thm.\ 6.2]{SV14} constructs out of a non-special line bundle of degree $g-1$ on $X$ an admissible determinantal representation of $X$ of degree $\deg X$. On the other hand, we can recover the Ulrich line bundle from the algorithm as follows. Note that by \cite[Cor.\ 4.9]{SV14} we can find the right (and similarly the left) kernel of the determinantal representation. The left kernel is isomorphic to $\cL \otimes \iota^* \cO_{\pp^d}(1)$, where $\cL$ is the non-special line bundle that we have started with and $\iota$ is the embedding of $X$ into $\pp^d$. To see this note that the fiber of the left kernel at a point $p \in X \setminus \operatorname{Supp}(D)$ is spanned by $u^{\times}_{D,\ell}(p)$ (see \cite[pp.\ 18]{SV14} for the definition), where $D$ is the divisor of $\iota^* \cO_{\pp^d}(1)$. Note that since the curve is smooth and the degree of the determinantal representation is $\deg X$, the left kernel is a line bundle and in fact $u^{\times}_{D,\ell}$ is a section of the left kernel. We conclude that the left kernel is isomorphic to $\cL \otimes \iota^* \cO_{\pp^d}(1)$ (the reader can also compare this argument with \cite{Vin89} for the case of plane curves). Thus $\cL \otimes \iota^* \cO_{\pp^d}(1)$ is precisely the Ulrich sheaf of degree $\deg X$ supported on $X$ that corresponds to the determinantal representation in the proof of Theorem \ref{thm:ulrich_iff_admissible}.
\end{proof}

\section{Real Varieties and Bilinear Forms} \label{sec:real_bilinear}
In this section, we work again over the ground field $\R$. Again we write $\pp^d=\pp^d_\R$.
Let $X$ be an $\R$-variety
and let $\cE$ be a coherent sheaf on $X$.
A non-degenerate $\cE$-valued bilinear form on a coherent sheaf $\cF$ is a map $\varphi \colon \cF \otimes \cF \to \cE$, 
such that the adjoint morphism $\kappa \colon \cF \to \shHom_{\cO_X}(\cF,\cE)$ is an isomorphism. We will call a form symmetric if $\varphi = \varphi \circ \epsilon$, where $\epsilon \colon \cF \otimes \cF \to \cF \otimes \cF$ is the isomorphism induced
 by the morphism of presheaves sending a section $f \otimes g$ to $g \otimes f$. If $\cF$ is reflexive and $\cE$ is a line bundle, then we have that $\varphi$ is symmetric if and only if we have that $\kappa^t = \kappa$, where $\kappa$ is the morphism obtained by applying $\shHom_{\cO_X}(-,\cE)$ to $\kappa$.



\begin{dfn} \label{dfn:positive_form}
Let $\cF$ be a coherent sheaf on an $\R$-variety $X$ and assume that it admits a non-degenerate symmetric $\cO_X$-valued bilinear form $\varphi$. Then we say that $\varphi$ is positive (resp. negative) definite if for every closed point $x \in X(\R)$ we have that the induced form on the fiber of $\cF$ is positive (resp. negative) definite.

\end{dfn}

\begin{rem}
If $\cF \cong \cO_X^n$, then a $\cO_X$-valued bilinear form is positive (resp. negative) definite if and only if the induced form on the global sections is positive (resp. negative) definite.
\end{rem}

\begin{rem}
More generally one can define definite $\cL$-valued bilinear forms on $\cF$, for $\cL$ a line bundle on $\cF$, following \cite[\S II.7.2]{Knu91}. Namely, we will say that a non-degenerate bilinear $\cL$-valued form on $\cF$ is (semi-)definite if for every closed point $x \in X(\R)$ 
we have that the induced form on the fiber of $\cF$ is (semi-)definite. In this case, however, one cannot speak of positive or negative definite forms, since this depends on the chosen trivialization.
\end{rem}

\begin{dfn} \label{dfn:Livsic-type_det_rep_sym}
Let $X \subset  \pp^d$ be a projective $\R$-variety of dimension $k$. Let $\gamma \in \wedge^{k+1} \R^{d+1} \otimes \textrm{M}_n(\R)$ be a Livsic-type determinantal representation of $X_\C$. Having a basis $e_0, \ldots, e_d$ of $\R^{d+1}$ and $I=\{i_0,\ldots,i_k\} \subset  \{0, \ldots, d \}$ with $i_0<i_1<\cdots<i_k$ we denote $e_I=e_{i_0}\wedge\cdots\wedge e_{i_k}$. If for some (and hence for every) basis $e_0, \ldots, e_d$ of $\R^{d+1}$ we can write \[\gamma= \sum_{I \subset  \{0, \ldots, d \}, \, |I|=k+1} e_I \otimes\gamma_I\] for some real \textit{symmetric} $n\times n$ matrices $\gamma_I$ over $\R$, then we say that $\gamma$ is a real \textit{symmetric} Livsic-type determinantal representation.
\end{dfn}

\begin{dfn}
 Let $X \subset  \pp^d$ be a projective $\R$-variety of dimension $k$. Let $\gamma \in \wedge^{k+1} \R^{d+1} \otimes \textrm{M}_n(\R)$ be a symmetric admissible determinantal representation of $X_\C$. As explained in Remark \ref{rem:chow} we can think of $\gamma$ as a determinantal representation of some power of the Chow form of $X$. Evaluating $\gamma$ at a linear space $E\in\G(d-k-1,d)$ is well defined up to a nonzero scalar factor. In particular, it makes sense to talk about the rank and, if $E$ is a real linear space, the signature (up to sign) of $\gamma$ at $E$. We say that $\gamma$ is \textit{definite at} $E$ if $E$ is a real linear space and the signature of $\gamma$ at $E$ is $n$ (or $-n$).
\end{dfn}

We have seen in the preceding section that admissible determinantal representations of a variety $X \subset  \pp^d$ correspond to Ulrich sheaves supported on $X$. It was shown in \cite[Prop. 3.12]{SV14} that the existence of a real symmetric admissible Livsic-type determinantal representation for $X_\C$ which is definite at a linear space $V$ (of correct dimension) implies that $X$ is hyperbolic with respect to $V$. 

In the following, we elaborate what the properties of being real symmetric and definite at a certain linear space mean for the corresponding Ulrich sheaf.
We will use notations from the theory of Grothendieck duality in the case of
finite morphisms. Let $f: X \to Y$ be a finite morphism of noetherian schemes. Let $\cG$ be a quasi-coherent sheaf on $Y$ and consider the sheaf
$\shHom_{\cO_Y}(f_* \cO_X, \cG)$. Since this is a quasi-coherent $f_* \cO_X$-module, it corresponds to a quasi-coherent $\cO_X$-module
which we will denote by $f^! \cG$.\label{not:uppershriek}
An introduction to the theory of Grothendieck duality in its full generality (and not just for finite morphisms) is given in \cite{AK70} or in \cite{Con00}. We will mostly use the notations of \cite{AK70}.
For the reader not familiar with the ideas of Grothendieck duality we recall the following basic lemma \cite[III \S 6, Ex. 6.10]{Har77}.
\begin{lem}\label{lem:grothbasic}
 Let $f: X \to Y$ be a finite morphism of noetherian schemes.
 Let $\cF$ be a coherent sheaf on $X$ and $\cG$ be a quasi-coherent sheaf on $Y$. There is a natural isomorphism
 \[f_* \shHom_{\cO_X}(\cF, f^! \cG) \to \shHom_{\cO_Y}(f_* \cF, \cG)\]
 of quasi-coherent $f_*\cO_X$-modules.
\end{lem}
Let $f: X \to Y$ be a finite morphism of noetherian schemes. Let $\cF$ be a coherent sheaf on $X$ and consider an $f^! \cO_Y$-valued bilinear form on $\cF$, i.e., a morphism  $\cF \otimes\cF \to f^! \cO_Y$ of coherent $\cO_X$-modules. This corresponds to a morphism $\cF \to \shHom_{\cO_X}(\cF, f^! \cO_Y)$. Lemma \ref{lem:grothbasic} tells us that this gives us a morphism (of quasi-coherent $f_*\cO_X$-modules)
 \[
  f_* \cF \to \shHom_{\cO_Y}(f_* \cF, \cO_Y)
 \]
 which gives rise to an $\cO_Y$-valued bilinear form on the pushforward $f_* \cF$.

In the special case when $\cF$ is an Ulrich sheaf supported on a $k$-dimensional projective $\R$-variety $X\subset \pp^d$ and $f: X\to\pp^k$ is a finite surjective linear projection, a non-degenerate $f^! \cO_{\pp^k}$-valued bilinear form on $\cF$ gives rise to a non-degenerate $\cO_{\pp^k}$-valued bilinear form on $\cO_{\pp^k}^N$.

\begin{thm}
 Let $X \subset  \pp^d$ be a projective $\R$-variety of dimension $k$. Let $E \subset  \pp^d$ be a real linear subspace of dimension 
 $d-k-1$, such that
 $E \cap X = \emptyset$ and let $f: X \to \pp^k$ be the linear projection from center $E$.
 Then the following are equivalent:
 \begin{enumerate}[(i)]
  \item There exists an Ulrich sheaf $\mathcal{F}$ supported on $X$ together with a non-degenerate
   $f^! \cO_{\pp^k}$-valued symmetric bilinear form on $\cF$, such that the corresponding $\cO_{\pp^k}$-valued
   form on $\cO_{\pp^k}^N$ is positive definite.
  \item The complexification $X_\C$ has an admissible real symmetric
  determinantal representation $\gamma\in \wedge^{k+1} \R^{d+1} \otimes \textrm{M}_n(\R)$ that is definite at $E$.
 \end{enumerate}
\end{thm}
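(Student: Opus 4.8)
The plan is to connect the two descriptions through the dictionary already established in Theorem \ref{thm:ulrich_iff_admissible} and Proposition \ref{prop:uniqueness}, keeping track of the extra structure of a symmetric bilinear form on top of the Ulrich sheaf. First I would recall that an admissible determinantal representation $\gamma\in\wedge^{k+1}\C^{d+1}\otimes\textrm{M}_n(\C)$ arises from the composite $\gamma=\psi_1\circ\cdots\circ\psi_{d-k}$ of the differentials in the linear resolution of the module $M=\bigoplus_j\textrm{H}^0(\pp^d,\cF(j))$; an $f^!\cO_{\pp^k}$-valued bilinear form $\varphi$ on $\cF$ gives, after applying $f_*$ and using the Grothendieck-duality isomorphism recorded just before the theorem, a non-degenerate $\cO_{\pp^k}$-valued form on $\cO_{\pp^k}^N\cong f_*\cF$, i.e. a genuine invertible symmetric matrix $P\in\textrm{M}_N(\R)$ (constant, since it is a self-map of a trivial bundle), and positive definiteness of $\varphi$ in the sense of Definition \ref{dfn:positive_form} is exactly positive definiteness of $P$. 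So one direction amounts to: given $\gamma$ real symmetric with $\gamma(E)$ positive definite, produce $\cF$ and $\varphi$; the other to: given $(\cF,\varphi)$ with the $\cO_{\pp^k}$-form positive definite, produce the symmetric $\gamma$ with $\gamma(E)$ definite.

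For $(ii)\Rightarrow(i)$ I would start from the commuting pencil $T_1,\dots,T_{d-k}$ attached to $\gamma$ as in the proof of Theorem \ref{thm:ulrich_iff_admissible}, after the change of variables making $T_j=z_j-T_{0,j}$ so that the $z_j$, $j=1,\dots,d-k$, are exactly the linear forms vanishing on $E$ (this is where the hypothesis ``positive definite at $E$'' enters: $\gamma(E)$ is, up to normalization, the Hermite/Bézout-type form built from the $T_{0,j}$, equivalently the bilinear form computing $\tr$ along the fibers of $f$). Symmetry of the $\gamma_I$ translates, via the Koszul complex, into the differentials $\psi_j$ and $\psi_{d-k+1-j}$ being transposes of one another up to sign, so that the resolution of $M$ is self-dual; applying $\shExt^{d-k}(-,\cO_{\pp^d}(k-d))$ — which, by the discussion of Ulrich duality in Section \ref{sec:ulrich}, is a duality on Ulrich sheaves — sends $\cF$ to a sheaf isomorphic to $\cG=\shHom_{\cO_X}(\cF,f^!\cO_{\pp^k})$, and the self-duality of the resolution gives the adjoint isomorphism $\kappa\colon\cF\to\cG$, i.e. a symmetric non-degenerate $f^!\cO_{\pp^k}$-valued form $\varphi$. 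Pushing forward, $f_*\kappa$ is the constant symmetric matrix assembled from the $T_{0,j}$ evaluated along $E$, which is $\gamma(E)$ up to the identification $f^!\omega_{\pp^k}\cong\omega_X$; positive definiteness of $\gamma(E)$ is precisely positive definiteness of that $\cO_{\pp^k}$-form.

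For $(i)\Rightarrow(ii)$ I would run the same construction backwards: $\cF$ Ulrich gives the linear resolution and hence $\gamma$ by Theorem \ref{thm:ulrich_iff_admissible}; the isomorphism $\kappa\colon\cF\to\cG$ coming from $\varphi$ lifts (by projectivity/minimality of the resolution, as in Proposition \ref{prop:uniqueness}) to an isomorphism of the resolution of $M=\bigoplus_j\textrm{H}^0(\cF(j))$ with the resolution of the module of twisted sections of $\cG$, which is the transpose-dual of the first; comparing the two, the maps $A_j$ conjugating the differentials can be chosen so that $\gamma$ becomes similar to a representation with all $\gamma_I$ symmetric — one absorbs a ``square root'' of the $A_j$, the point being that $\kappa$ is itself symmetric so the obstruction to symmetrizing lives in a space where it vanishes. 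Finally, $f_*\kappa$ being a positive definite constant matrix is, after evaluating the symmetrized $\gamma$ on $E$ and using Grothendieck duality as above, exactly the statement that $\gamma(E)\succ0$.

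The main obstacle I expect is the bookkeeping in $(i)\Rightarrow(ii)$: turning the abstract symmetric isomorphism $\kappa$ into an honest similarity $\gamma\sim A\gamma^{\textrm{T}}A^{\textrm{T}}$ with a single matrix, and then, over $\R$, extracting a congruence rather than just a similarity so that the signature information (``$\gamma(E)$ positive definite'' rather than merely ``$\gamma(E)$ invertible'') is preserved. This requires that the $A_j$ produced by lifting be compatible with the pairing — i.e. that one can arrange $A_{j}^{\textrm{T}}=\pm A_{d-k+1-j}$ — and that the resulting congruence is by a real matrix; the real-field subtlety is handled because $\varphi$ is assumed positive definite, so the relevant quadratic forms are anisotropic of the right signature and Witt cancellation applies. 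Everything else is a matter of carefully transporting the Grothendieck-duality isomorphism $f_*\shHom_{\cO_X}(\cF,f^!\cO_{\pp^k})\cong\shHom_{\cO_{\pp^k}}(f_*\cF,\cO_{\pp^k})$ through the two constructions of Theorem \ref{thm:ulrich_iff_admissible}, which are inverse to each other by Proposition \ref{prop:uniqueness}.
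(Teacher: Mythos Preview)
Your overall strategy---transport the Ulrich/determinantal dictionary of Theorem \ref{thm:ulrich_iff_admissible} while carrying a symmetric form---is correct, but for $(i)\Rightarrow(ii)$ you have made the argument substantially harder than necessary, and the ``square-root extraction / Witt cancellation'' step you correctly flag as the main obstacle is in fact avoidable. The paper does not lift $\kappa$ to the full linear resolution and then symmetrize. Instead it works on the pushforward: writing $R=\R[z_{d-k+1},\ldots,z_d]$ and $M=\textrm{H}^0_*(\cF)\cong R^N$, the remaining coordinates $z_1,\ldots,z_{d-k}$ act on $R^N$ by matrices $A_1,\ldots,A_{d-k}$ with degree-one entries in $R$. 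The hypothesis that $\varphi$ is $\cO_X$-linear says exactly that each $A_i$ is self-adjoint for the constant symmetric form $P$ on $R^N$ coming from $f_*\kappa$. Since $P$ is positive definite over $\R$, pass to an orthonormal basis: the $A_i$ become \emph{literally} symmetric matrices. Now the Koszul complex on $T_i=z_i-A_i$ is the linear resolution, and a short sign computation shows the composed tensor $\gamma$ is symmetric with $\gamma(E)$ equal to the identity. No lift of $\kappa$ to the resolution, no congruence-vs-similarity issue, no Witt group argument.

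Two further points of comparison. First, your description of $f_*\kappa$ in the direction $(ii)\Rightarrow(i)$ as ``the constant symmetric matrix assembled from the $T_{0,j}$ evaluated along $E$, which is $\gamma(E)$'' is not how the identification runs: after normalizing $\gamma(E)=I$, the form on $\cO_{\pp^k}^n$ is simply the standard inner product on $\R^n$, and the point is that this inner product is $f_*\cO_X$-linear \emph{because} the $A_i$ are symmetric---nothing is being ``assembled'' from the $T_{0,j}$. Second, your claim that symmetry of the $\gamma_I$ forces $\psi_j$ and $\psi_{d-k+1-j}$ to be transposes up to sign is plausible but would itself require the kind of Koszul bookkeeping the paper carries out in the other direction; the paper sidesteps this by producing the resolution directly from symmetric $T_i$, so that self-duality is automatic rather than something to be verified. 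In short: your route via resolution self-duality could likely be completed, but the paper's ``diagonalize first, then build Koszul'' approach buys you symmetry and the correct value at $E$ in one stroke.
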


\begin{proof}
 Let $R=\textrm{H}_*^0 \cO_{\pp^k} \cong \R[z_{d-k},\ldots,z_d]$
 and let $S=\textrm{H}^0_* \cO_{\pp^d} \cong \R[z_0,\ldots,z_d]$.
 We can assume that under these identifications the linear projection $f$ corresponds to the inclusion 
 $\R[z_{d-k},\ldots,z_d] \hookrightarrow \R[z_0,\ldots,z_d].$

 First, assume that there is such an Ulrich sheaf. Let the bilinear form given by the morphism $\varphi:\cF \to \shHom_{\cO_X}(\cF, f^! \cO_Y)$ of $\cO_X$-modules.  Furthermore, let $M=\textrm{H}_*^0 \mathcal{F}$.
 As an $R$-module we have $M=R^N$ and $z_{0}, \ldots, z_{d-k-1}$
 act like matrices $A_{0},\ldots, A_{d-k-1}$ with homogeneous
 elements of degree one from $R$ as entries.
 The fact that $\varphi$ is a morphism of $\cO_X$-modules translates to these matrices being selfadjoint with respect to the corresponding $\cO_{\pp^k}$-valued
   symmetric bilinear form on $\cO_{\pp^k}^N$. Since it is positive definite, there is a basis of $R^N$ with respect to which $A_{0}, \ldots, A_{d-k-1}$ are symmetric. Let $T_i=z_{i}-A_{i}$. The free resolution of $M$ is given by the Koszul complex $\textbf{K}(T)$ associated to $T_0, \ldots, T_{d-k-1}$ by Remark \ref{rem:koszulres}.
 In Example \ref{exp:alternatingmatrix} we have seen that the matrix $\gamma(\textbf{K}(T))$ is symmetric. In the proof of Theorem \ref{thm:ulrich_iff_admissible} we have seen that $\gamma(\textbf{K}(T))$ is an admissible determinantal representation. Furthermore, the alternating form defined by $\gamma(\textbf{K}(T))$ is given by 
 \[
  \gamma(\textbf{K}(T))(v_0,\ldots,v_{d-k-1})=\sum_{\sigma \in \mathfrak{S}_{d-k}} \textnormal{sgn}( \sigma) \cdot T_{\sigma(0)}(v_0)  \cdots T_{\sigma(d-k-1)}(v_{d-k-1}).
 \]
 Letting $v_i$ be the $i$th unit vector $\delta_{i}$, the above expression is the identity matrix
 since $T_i$ evaluated at $\delta_{j}$ is the identity matrix if $i=j$ and zero otherwise for all $i,j=1, \ldots, d-k$. This shows positive definiteness.
 
 Now we assume that there is such an admissible symmetric determinantal representation $\gamma$ as in $(ii)$.
 We can assume that $\gamma$ evaluated at $E$ is the identity matrix.
 Let us define an Ulrich sheaf on $X$, let $T_0,\ldots,T_{d-k-1}$ be the matrices of linear forms of size $n \times n$ obtained from $\gamma$ as in the second part of the proof of Theorem \ref{thm:ulrich_iff_admissible}.
 We can write $T_{i}=z_{i}-A_{i}$ where $A_{i}$ is an 
 $n \times n$ matrix whose entries are linear forms in the variables $z_{d-k},\ldots,z_d$.
 The matrices $T_{i}$, and thus also the matrices $A_{i}$, commute pairwise.
 By letting $z_{i}$ act on $R^n$ via $A_{i}$ we get a graded $S$-module $M$. It follows from the proof of Theorem \ref{thm:ulrich_iff_admissible} that $M$ is an Ulrich module. Let $\cF=\widetilde{M}$ be the corresponding Ulrich sheaf. The isomorphism $M \to \Hom_R(M,R)$ that sends the standard basis of $R^n$ to its dual basis gives us the desired $f^! \cO_{\pp^k}$-valued bilinear form on $\iota^* \cF$ because the matrices $A_{i}$ are symmetric. By construction, the standard basis is an orthonormal basis and therefore the bilinear form is positive definite.
\end{proof}

\begin{rem}
  It was shown in \cite{HUB91} that if $X \subset  \pp^d$ is a complete intersection, then $X$ has an Ulrich sheaf. 
  If a variety $X \subset  \pp^d$ admits an Ulrich sheaf that satisfies the positivity conditions of the preceding theorem, then $X$ must be hyperbolic.
  But not every hyperbolic complete intersection has such an Ulrich sheaf: Br\"and\'en \cite{Bra11}
  constructed a hyperbolic hypersurface, such that no power of its defining polynomial can be written as the determinant of a real symmetric
  matrix
  with linear entries that is positive definite at some point.
  On the other hand, it was shown in \cite{Kum} that for every smooth hyperbolic hypersurface $X \subset  \pp^d$ there is a 
  hypersurface $Y \subset  \pp^d$, such that $X \cup Y$ admits an Ulrich sheaf with the positivity conditions of the above theorem.
  This is very much related to the generalized Lax conjecture, see for example \cite[Conjecture 3.3]{Vppf}.
  The following question is the natural extension of the result mentioned above to varieties of higher codimension.
\end{rem}

\begin{quest} \label{quest:generalized_lax}
 Let $X \subset  \pp^d$ be a smooth and irreducible variety of dimension $k$
 which is hyperbolic with respect to the linear space $E$. Is there a variety $Y \subset  \pp^d$ of the same dimension as $X$
 and also hyperbolic with respect to $E$
 such
 that $X \cup Y$ admits an Ulrich sheaf $\mathcal{F}$  together with a nondegenerate symmetric
   $f^! \cO_{\pp^k}$-valued bilinear form $\varphi$ on $\cF$, such that the corresponding symmetric
   form on $\cO_{\pp^k}^N$ is positive definite? Here $f: X \cup Y \to \pp^k$ denotes the linear projection from center $E$.
\end{quest}

Let $f \colon X \to Y$ be a finite flat morphism of degree $m$. Let $\cF$ be a coherent sheaf on $X$.
We say that $\cF$ is  $f$-positive (resp. $f$-nonnegative) if
it admits a non-zero $f^! \cO_Y$-valued
bilinear form, such that the induced $\cO_Y$-valued bilinear form on $f_* \cF$ is symmetric and positive definite (resp. positive semidefinite).
Following \cite{KMS14} one can define  the notion of $f$-Ulrich sheaves, namely a sheaf $\cF$ on $X$ is $f$-Ulrich
if there exists a positive integer $r$, such that $f_*\cF \cong \cO_Y^{mr}$. We will say that an $f$-Ulrich sheaf $\cF$ is positive if it is
$f$-positive.
To show the connection between  $f$-nonnegative sheaves and real fibered morphisms
we need a lemma:

\begin{lem} \label{lem:bilinear_ordering}
Let $L/K$ be a finite extension of fields, let $V$ be a finite-dimensional vector space over $L$.
Assume that there is an $L$-linear, non-zero homomorphism $\varphi \colon V \to \Hom_K(V,K)$,
such that the corresponding $K$-bilinear form on $V$ is symmetric 
and positive semidefinite with respect to some ordering $P$ on $K$, then $P$ has exactly $[L:K]$ extensions to $L$.
\end{lem}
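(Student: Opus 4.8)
The plan is to reduce to the case where the bilinear form is the trace form, using the structure theory of symmetric bilinear spaces over $L$ together with the transfer (Scharlau transfer) of quadratic forms along $L/K$. First I would observe that since $V$ is finite-dimensional over $L$ and $\varphi$ is $L$-linear, the $K$-bilinear form $\langle v,w\rangle := \varphi(v)(w)$ can be written in the form $\operatorname{tr}_{L/K}(s\cdot q(v,w))$ for some nonzero $K$-linear functional $s\colon L\to K$ and some $L$-bilinear form $q$ on $V$; more precisely, the $K$-bilinear forms on $V$ that are "$L$-sesquilinear" in this weak sense are exactly the transfers $s_*(q)$ of $L$-bilinear forms $q$. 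Since $\varphi$ is nonzero we may assume $q$ is nonzero; since the $K$-form is symmetric, $q$ is symmetric as well (after adjusting by the involution, which is trivial here). The key point is that the transfer $s_*$ sends an $L$-form $q$ of rank $n = \dim_L V$ to a $K$-form of rank $n[L:K]$, and it is compatible with signatures: for an ordering $P$ of $K$, the signature of $s_*(q)$ at $P$ is the sum, over the extensions $Q$ of $P$ to $L$, of $\pm(\text{signature of }q\text{ at }Q)$, where the sign depends only on whether $s$ is positive or negative at $Q$ (this is the behaviour of the transfer on signatures; it follows from diagonalizing $q$ over $L$ and applying the one-dimensional case, i.e. Sylvester's theorem for $\operatorname{tr}_{L/K}(s\cdot x^2)$).

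From this compatibility the conclusion is immediate: the signature of the $K$-form at $P$ is at most $\sum_Q |{\text{sig}}_Q(q)| \le \sum_Q \operatorname{rank}(q) = [L:K]_P \cdot n$ where $[L:K]_P$ denotes the number of extensions $Q$ of $P$. On the other hand the $K$-form is positive semidefinite of rank $n[L:K]$, so its signature at $P$ equals $n[L:K]$. Comparing, we need $[L:K]_P \cdot n \ge n[L:K]$, hence $[L:K]_P \ge [L:K]$, and since the number of extensions is always at most $[L:K]$ we get equality $[L:K]_P = [L:K]$, which is the claim.

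The main obstacle, and the step requiring the most care, is making precise the claim that every nonzero $L$-linear $\varphi\colon V\to\Hom_K(V,K)$ arises as a transfer $s_*(q)$ and controlling how positive semidefiniteness of the $K$-form forces both $s$ to be positive at every extension $Q$ and $q$ to be anisotropic and positive definite at each $Q$ — a priori the signs could cancel. The clean way around this is: write the $K$-form as $\langle v, w\rangle = \operatorname{tr}_{L/K}(\ell \cdot q(v,w))$ for a suitable $\ell\in L^\times$ (absorbing $s$ into $q$ via $q' = \ell q$ is not quite allowed since we want $q$ symmetric $L$-bilinear and $\operatorname{tr}_{L/K}$ itself as the functional), diagonalize $q$ over $L$ as $\langle a_1,\dots,a_n\rangle$, so the $K$-form is $\perp_i \operatorname{tr}_{L/K}(\langle a_i\rangle)$; at an ordering $P$ of $K$ the signature of each $\operatorname{tr}_{L/K}(\langle a_i\rangle)$ is $\sum_{Q\mid P}\operatorname{sign}_Q(a_i)$, which lies in $[-[L:K]_P, [L:K]_P]$ with equality to $+[L:K]_P$ iff $a_i$ is positive at every $Q$. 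Positive semidefiniteness of the total $K$-form means its signature is $n[L:K]$; since each summand contributes at most $[L:K]_P \le [L:K]$ and there are $n$ summands, every summand must contribute exactly $[L:K]$, forcing $[L:K]_P = [L:K]$. I would cite Scharlau's book (or Lam's \emph{Introduction to Quadratic Forms over Fields}) for the transfer and its effect on signatures, and Sylvester's law of inertia for the one-dimensional computation.
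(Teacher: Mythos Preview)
Your approach via Scharlau transfer is correct and is genuinely different from the paper's. The paper's proof is considerably more elementary: after modding out $\ker\varphi$ (an $L$-subspace, so the quotient is still an $L$-vector space) the form becomes nondegenerate and hence positive definite at $P$. One then picks a primitive element $\alpha$ with $L=K[\alpha]$, observes that multiplication by $\alpha$ on $V$ is self-adjoint with respect to the $K$-form (this is exactly what $L$-linearity of $\varphi$ encodes), and passes to the real closure $R$ of $(K,P)$: over $R$ the form has an orthonormal basis, so $f_\alpha$ is represented by a symmetric matrix and its minimal polynomial splits in $R$. Hence $P$ has $[L:K]$ extensions. Nothing beyond the primitive element theorem and the spectral theorem over a real closed field is used.

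Your route trades this for the signature formula $\operatorname{sig}_P\bigl(\operatorname{tr}_{L/K *}\langle a\rangle\bigr)=\sum_{Q\mid P}\operatorname{sign}_Q(a)$, which is a clean, conceptual way to package the same phenomenon and makes transparent that the conclusion is a count of real places; it also avoids choosing a primitive element. One small slip to fix: you assert that the $K$-form has signature $n[L:K]$, but positive semidefiniteness only gives signature equal to the \emph{rank}, which is $(\operatorname{rank}_L q)\cdot[L:K]$, and nothing in the hypotheses forces $q$ to be nondegenerate. This is harmless---replace $n$ by $m=\operatorname{rank}_L q\ge 1$ throughout, or mod out the radical first as the paper does---but as written the sentence ``positive semidefinite of rank $n[L:K]$'' is unjustified. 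Your identification of the $K$-form with a transfer $\operatorname{tr}_{L/K *}(q)$ is correct: the adjunction $\operatorname{Hom}_K(V,K)\cong\operatorname{Hom}_L(V,\operatorname{Hom}_K(L,K))\cong\operatorname{Hom}_L(V,L)$ (the last isomorphism via the nondegenerate trace, available since $\operatorname{char}K=0$) gives exactly this, and symmetry of $q$ follows from symmetry of the $K$-form by nondegeneracy of the trace pairing.
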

\begin{proof}
After dividing out the kernel of $\varphi$ we can restrict to the case where $\varphi$ is injective.
Let $\alpha \in L$ be a primitive generator, i.e., $L = K[\alpha]$. Let $p$ be the minimal polynomial of $\alpha$. 
It suffices to show that $p$ splits over the real closure of $K$ with respect to $P$ that we will denote by $R$. 
Note that $p$ is the minimal polynomial of the $K$-linear map $f_{\alpha} \colon V \to V$ defined by $v \mapsto \alpha v$. 
Since $\varphi$ is $L$-linear, we have that $f_{\alpha}$ is selfadjoint with respect to the $K$-bilinear form defined by $\varphi$. 
Since the bilinear form induced on $V\ \otimes_K R$ is positive definite it admits an orthogonal basis. 
The representing matrix of $f_{\alpha}$ with respect to this basis is symmetric. Thus all of the roots of its characteristic polynomial lie in $R$. 
We conclude that $p$ splits over $R$.
\end{proof}

\begin{thm} \label{thm:positive_ulrich_then_real_fibered}
Let $X$ and $Y$ be irreducible $\R$-varieties and assume that $Y$ is smooth.
Let $f \colon X \to Y$ be a finite flat morphism. Then $f$ is real fibered if and only if there is an $f$-nonnegative coherent sheaf $\cF$ with
$\Supp \cF = X$.
\end{thm}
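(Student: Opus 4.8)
The statement splits into two implications; the forward one is short, the reverse one carries the content.

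For ``$f$ real fibered $\Rightarrow$ there is an $f$-nonnegative sheaf'' I would simply take $\cF = \cO_X$. Since $f$ is finite and flat, the trace map $\tr\colon f_*\cO_X \to \cO_Y$ is defined and surjective (in characteristic zero $\tr(1)=\deg f\neq 0$), and under the identification $f^!\cO_Y = \shHom_{\cO_Y}(f_*\cO_X,\cO_Y)$ it corresponds to a non-zero $f^!\cO_Y$-valued bilinear form $\varphi$ on $\cO_X$ whose induced $\cO_Y$-valued form on $f_*\cO_X$ is the (symmetric) trace form $(a,b)\mapsto\tr(ab)$. To see this form is positive semidefinite, fix $y\in Y(\R)$; by flatness its fibre at $y$ is the trace form of the finite $\R$-algebra $C_y=(f_*\cO_X)\otimes_{\cO_Y}\kappa(y)$, whose spectrum is the scheme-theoretic fibre $f^{-1}(y)$. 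Every point of $f^{-1}(y)$ lies over $y\in Y(\R)$, hence is real since $f$ is real fibered, so $\Spec C_y$ consists only of $\R$-points and the trace form of $C_y$ is positive semidefinite by the criterion recalled before Theorem \ref{thm:function_field_orede}. Thus $\cO_X$ is $f$-nonnegative and, of course, $\Supp\cO_X = X$.

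For the converse, let $\cF$ be an $f$-nonnegative sheaf with $\Supp\cF=X$ and non-zero form $\varphi\colon\cF\otimes\cF\to f^!\cO_Y$. Non-vanishing of $\varphi$ forces $X\neq\emptyset$, and then $f$ is surjective (it is finite and flat and $Y$ is irreducible), so by Theorem \ref{thm:function_field_orede} it suffices to show that every ordering $P$ of the function field $K$ of $Y$ has exactly $m=\deg f=[L:K]$ extensions to the function field $L$ of $X$. I would obtain this from Lemma \ref{lem:bilinear_ordering} by localizing $\varphi$ at the generic point $\xi$ of $X$. Put $V=\cF_\xi$, a non-zero finite-dimensional $L$-vector space. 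Since $f$ is finite and $X$ irreducible, $\xi$ is the only point over the generic point $\eta$ of $Y$, so $(f_*\cF)_\eta=V$ and $(f^!\cO_Y)_\xi\cong\Hom_K(L,K)$; the latter is free of rank one over $L$ because $L/K$ is separable, and $\Hom_L(V,\Hom_K(L,K))\cong\Hom_K(V,K)$ by the restriction--coinduction adjunction. Hence the adjoint $\kappa$ of $\varphi$ localizes to an $L$-linear map $\kappa_\xi\colon V\to\Hom_K(V,K)$, and the associated $K$-bilinear form $b_P$ on $V$ is symmetric since $\varphi$ is.

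It then remains to verify that $b_P$ is non-zero and positive semidefinite with respect to $P$, after which Lemma \ref{lem:bilinear_ordering} gives the $[L:K]$ extensions and Theorem \ref{thm:function_field_orede} finishes the proof. For non-vanishing: $f^!\cO_Y$ is torsion-free over $\cO_X$ (being locally free over $\cO_Y$), hence so is $\shHom_{\cO_X}(\cF,f^!\cO_Y)$, so $\kappa$ kills the torsion subsheaf of $\cF$ and factors through the torsion-free sheaf $\cF/\cF_{\mathrm{tors}}$ on the irreducible $X$; a non-zero morphism from such a sheaf is non-zero at $\xi$, so $\kappa_\xi\neq 0$. For positive semidefiniteness: on an affine open $\Spec A\subseteq Y$, choose a presentation $A^q\to N$ of the $A$-module $N$ corresponding to $f_*\cF$; the induced $\cO_Y$-form pulls back to a symmetric matrix over $A$, and it is positive semidefinite at $\alpha\in\Sper A$ iff that matrix is, so the positive-semidefinite locus is closed (nonnegativity of principal minors) and contains the dense set of closed real points. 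Hence the form is positive semidefinite on all of $\Sper A$, in particular at the point of $\Sper K\subseteq\Sper A$ given by $P$, i.e. $b_P$ is positive semidefinite with respect to $P$. The main obstacle, I expect, is exactly this passage from the sheaf-level hypotheses to their generic-point counterparts: tracking the torsion-freeness of $f^!\cO_Y$ and running the density-of-closed-points argument (as in the proof of Theorem \ref{thm:function_field_orede}, after reducing to a matrix via a presentation); once that is in place, Lemma \ref{lem:bilinear_ordering} is precisely the tool needed.
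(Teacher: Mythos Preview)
Your proof is correct and follows essentially the same route as the paper: the forward direction uses the trace form on $\cO_X$, and the converse localizes the given bilinear form to the generic point, passes positive semidefiniteness to $\Sper K$ via density of closed real points, and then invokes Lemma~\ref{lem:bilinear_ordering} together with Theorem~\ref{thm:function_field_orede	}. The only cosmetic difference is that the paper first shrinks $Y$ using Grothendieck's Generic Freeness Lemma so that $f_*\cF$ becomes free (hence $\cF$ is $f$-Ulrich and one works directly with a matrix), whereas you pull back along a presentation $A^q\twoheadrightarrow N$ and argue non-vanishing at the generic point via torsion-freeness of $f^!\cO_Y$; both devices serve the same purpose and your treatment of the semidefinite (as opposed to definite) case is in fact slightly more careful.
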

\begin{proof}
By Theorem \ref{thm:function_field_orede	} it suffices to show that $f$ restricted to an open dense subset is real fibered.
Thus by Grothendieck’s Generic
Freeness Lemma, we can assume without loss of generality that $X$ and $Y$ are affine and $\cF$ is a positive $f$-Ulrich sheaf.
Denote by $L$ the field of functions on $X$ and by $K$ the field of functions on $Y$.
Let us write $A = \cO_Y(Y)$, $B = \cO_X(X)$ and $M=\cF(X)$.
Represent the positive definite symmetric bilinear form on $A^{mr}$ by a positive definite symmetric $mr \times mr$ matrix $T$.
That means that the leading minors are positive at every closed point of $\Sper A$ and thus at every point of $\Sper A$.
Now the bilinear form translates to an isomorphism $M \to \Hom_B(M,\Hom_A(B,A)) \cong \Hom_A(M,A)$. Localizing at the generic point gives us
a bilinear form that satisfies the assumptions of Lemma \ref{lem:bilinear_ordering}. Thus $f$ is real fibered by Theorem \ref{thm:function_field_orede	}.

Conversely, let $f$ be real fibered. We will show that $\cF=\cO_X$ is $f$-nonnegative.
Since $f$ is flat and finite we can define the trace morphism $\textnormal{Tr}_{X/Y}: f_* \cO_X \to \cO_Y$.
This gives us a map $ f_* \cO_X \to \shHom_{\cO_Y}(f_* \cO_X, \cO_Y)$ defined as follows: 
For any open subset $U$ of $Y$ and any section $a \in \cO_X(f^{-1}(U))$ we define the image of $a$ to be the map $b \mapsto (\textnormal{Tr}_{X/Y})_U (ab)$,
cf. \cite[Ch. VI, \S 6]{AK70}.
The corresponding $\cO_Y$-valued bilinear form is positive semidefinite since $f$ is real fibered (cf. the remarks and references before Theorem
\ref{thm:function_field_orede	}) and by Grothendieck duality, it corresponds to an $f^! \cO_Y$-valued bilinear form on $\cO_X$.
Therefore, $\cO_X$ is $f$-nonnegative.
\end{proof}

\begin{rem}
 Theorem \ref{thm:positive_ulrich_then_real_fibered} includes the classic methods to check whether an univariate polynomial
 has only real roots or not (see for example \cite{KN81}) as special cases.
 The so-called Hermite matrices correspond to  $f^! \cO_Y$-valued bilinear forms  on the structure sheaf $\cO_X$ and the so-called B\'ezout matrices 
 correspond to  $f^! \cO_Y$-valued bilinear forms on the sheaf $f^! \cO_Y$, cf. \cite[Section 3]{Kum}.
\end{rem}

Theorem \ref{thm:positive_ulrich_then_real_fibered} says in particular that the existence of a positive $f$-Ulrich sheaf implies that $f$ is real fibered.
This motivates the following relative version of Question \ref{quest:generalized_lax}:

\begin{quest} \label{quest:relative_generalized_lax}
Let $f \colon X \to Y$ be a real fibered morphism of irreducible $\R$-varieties. Does there exist a real fibered morphism $g \colon Z \to Y$ and a closed embedding of $X$ into $Z$ over $Y$, such that $Z$ has a positive $g$-Ulrich sheaf?
\end{quest}

\begin{rem}
Note that this question differs from Question \ref{quest:generalized_lax}, since we do not require $X$ and $Z$ to be embedded into the same projective space and the maps to be linear projections. If we set $Y = \pp^k$, we get a relaxed version of Question \ref{quest:generalized_lax}, which is also open.
\end{rem}

%

Recall from Remark \ref{rem:chow} that the fact that $X$ has a positive definite admissible determinantal representation implies that the Chow form of $X$ has a determinantal representation in the following form:
\[
\det\left(\sum_{0 \leq i_0 < \ldots < i_k \leq d} p_{i_0, \ldots, i_d} A_j \right).
\]
Here the $A_j$ are constant Hermitian matrices, the $p_{i_0, \ldots, i_d}$ are the Pl\"{u}cker coordinates and at some real point on the Grassmannian the above matrix is definite. From Corollary \ref{cor:veronese_not_hyperbolic} we immediately get the following:

\begin{cor} \label{cor:chow_veronese}
Let $X = \cV_m(\pp^k)$, the Veronese embedding of $\pp^k$ into $\pp^N$, $k,m \geq 2$, where $N = \binom{k + m}{k} - 1$, then the Chow form of $X$, i.e., the resultant $k +1$ forms of degree $m$ in $k+1$ variables, does not have a representation as a determinant of a matrix of linear forms as above, where the $A_j$ are Hermitian and for some real point on the Grassmannian the above matrix is positive definite.
\end{cor}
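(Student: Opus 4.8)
The plan is to argue by contradiction: a determinantal representation of the Chow form of the indicated type would, upon reversing the correspondence of \cite[Thm.\ 2.18]{SV14}, yield a positive definite admissible Livsic-type determinantal representation of $X = \cV_m(\pp^k)$ itself, and then \cite[Prop.\ 3.12]{SV14} would force $X$ to be hyperbolic, contradicting Corollary~\ref{cor:veronese_not_hyperbolic}.

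Concretely, suppose the Chow form of $X \subseteq \pp^N$ equals $\det\bigl(\sum_I p_I A_I\bigr)$, the sum over $(k+1)$-subsets $I \subseteq \{0,\ldots,N\}$, with the $A_I \in \textrm{M}_n(\C)$ constant Hermitian matrices, the $p_I$ the Plücker coordinates, and the pencil positive definite at some real point of the Grassmannian, corresponding to a real linear subspace $E \subseteq \pp^N$ of dimension $N-k-1$ (necessarily with $E \cap X = \emptyset$, since the pencil is nonsingular there). Bundling the $A_I$ into a tensor $\gamma \in \wedge^{k+1}\C^{N+1} \otimes \textrm{M}_n(\C)$, I would check --- as in \cite[\S 2]{SV14} and the discussion preceding Definition~\ref{dfn:Livsic-type_det_rep_sym} --- that $\gamma$ is a Livsic-type determinantal representation whose degeneracy locus is exactly $X$. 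The bookkeeping here is essentially forced: $\det(\sum_I p_I A_I)$ is a form of degree $n$ on the Grassmannian whose vanishing describes the $(N-k-1)$-planes meeting the degeneracy scheme of $\gamma$; since this must coincide with the Chow hypersurface of $X$, which has degree $\deg X$, we get $n = \deg X$ and the degeneracy scheme is $X$ with multiplicity one, i.e. $\gamma$ is admissible (cf. Definition~\ref{dfn:admissible} and Theorem~\ref{thm:ulrich_iff_admissible}). After a change of basis $\gamma(E)$ is the identity, so $\gamma$ is an admissible representation of $X_\C$, positive definite at $E$, with Hermitian components.

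Now \cite[Prop.\ 3.12]{SV14} --- whose proof of the hyperbolicity conclusion uses only that one has a Hermitian pencil positive definite along the real lines meeting $E$, so the Hermitian case runs exactly as the real symmetric one (alternatively one realifies, passing to a representation of $R_X^2 = R_{2[X]}$, whose hyperbolicity is the same as that of $X$) --- gives that $X$ is hyperbolic with respect to $E$. Since $k, m \ge 2$, this contradicts Corollary~\ref{cor:veronese_not_hyperbolic}, which asserts that $\cV_m(\pp^k)$ is hyperbolic with respect to no real linear subspace of dimension $N-k-1$; hence no such representation of the Chow form exists. The step I expect to require the most care --- though it is routine once the machinery of \cite{SV14} is in place --- is the reconstruction in the second paragraph: running the implication of \cite[Thm.\ 2.18]{SV14} backwards and confirming that the degeneracy scheme is $X$ itself and that the cycle degree comes out to $n$, together with the minor point about tolerating Hermitian rather than strictly real symmetric representations. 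After that the corollary is immediate.
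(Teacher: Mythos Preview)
Your argument is correct and lands on the same contradiction with Corollary~\ref{cor:veronese_not_hyperbolic}, but the paper takes a shorter path through \cite{SV14}. Instead of running \cite[Thm.\ 2.18]{SV14} backwards to rebuild an admissible Livsic-type representation $\gamma$ of $X$ and then applying \cite[Prop.\ 3.12]{SV14}, the paper invokes \cite[Prop.\ 3.5]{SV14} directly: that proposition already says that a Chow form admitting a Hermitian determinantal representation positive definite at some real point of the Grassmannian forces the underlying variety to be hyperbolic. One sentence then finishes the proof.

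The step you single out as needing care --- recovering $\gamma$ from the $A_I$, identifying its degeneracy locus with $X$, and matching degrees so that $\gamma$ is admissible --- is exactly the content that \cite[Prop.\ 3.5]{SV14} packages. Your version is thus a valid unpacking of that proposition (and your handling of the Hermitian-versus-real-symmetric point is fine), while the paper's version simply cites the packaged result. Either way the corollary follows; the paper's route is just more economical.
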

\begin{proof}
If the Chow form of $X$ would have had such a determinantal representation it would be hyperbolic in the sense of \cite[Prop.\ 3.5]{SV14} and therefore $X$ itself would be hyperbolic contradicting Corollary \ref{cor:veronese_not_hyperbolic}.
\end{proof}

\begin{example}
 Recall from Example \ref{exp:interlace} that any two interlacing polynomials give a linear space with respect to which the rational normal curve is hyperbolic. Take for example $f=s^3-4s t^2$ and $g=s^2 t- t^3$. The morphism $\varphi$ then corresponds to the projection of the twisted cubic $C=\{(s^3:s^2 t:s t^2: t^3):\, (s:t) \in \mathbb{P}^1\}$ from the linear space defined by $x_0=4x_2$ and $x_1=x_3$. The matrix of the determinantal representation from \cite[Exp. 6.4]{SV14} for $C$ at this linear space is \[\begin{pmatrix}1&0&-1\\0&3&0\\-1&0&4\end{pmatrix}.\] This matrix is positive definite as expected.
\end{example}

\bibliographystyle{abbrv}
\bibliography{Real_Fibered_rev}

\end{document}